\newcommand{\essinf}[1]{\underset{#1}{\mbox{ess inf}} \,}
\newcommand{\bq}{\begin{equation}}
\newcommand{\eq}{\end{equation}}
\newtheorem{theorem}{Theorem}
\newtheorem{corollary}[theorem]{Corollary}
\newtheorem{proposition}[theorem]{Proposition}
\newtheorem{definition}[theorem]{Definition}
\newtheorem{example}[theorem]{Example}
\newtheorem{remark}[theorem]{Remark}
\begin{document}
\title{On the role of Gittin’s index in singular stochastic control: semi-explicit solutions via the Wiener-Hopf factorisation.}
\author{Jenny Sexton\footnote{School of Mathematics, University of Manchester. Oxford Road, {\sc Manchester, M13 9PL, United Kingdom.} E-mail: jennifer.sexton@postgrad.manchester.ac.uk}}
\date{\today}
\maketitle

\begin{abstract}

This paper examines a class of singular stochastic control problems with convex objective functions. In Section 2, we use tools from convex analysis to derive necessary and sufficient first order conditions for this class of optimisation problems. The main result of this paper is Theorem \ref{Theorem connection} which uses results from optimal stopping to establish the link between singular stochastic control and Gittin's index without the need to appeal to the representation result in \cite{BEK}. In sections 3-5 we assume the singular control problem is driven by a L\'{e}vy process. Expressions for the Gittin's index are derived in terms of the Wiener-Hopf factorisation. This allows us to broaden the class of parameterised optimal stopping problems with explicit solutions examined in \cite{BF} and derive explicit solutions to the singular control problems studied in Section 2. In Section 4, we apply our results to the `monotone follower' problem which originates in \cite{BC} and \cite{K81}. In Section 5, we apply our results to an irreversible investment problem which has been studied in \cite{Bertola}, \cite{Kobila} and \cite{RiedelSu}.

\end{abstract}

\begin{tabbing}
{\footnotesize Keywords: singular stochastic control, Levy processes, Gittins index, optimal stopping.}\\

{\footnotesize Mathematics Subject Classification (2000): 60H30, 60G51, 60G40, 46N10, 93E20}

\end{tabbing}

\section{Introduction}
\label{section intro}

This paper aims to characterise the solution to the following singular control problem
\begin{equation}
\inf_{\theta \in \mathcal{A}}E_{x}\left[ \int_{0}^{\infty}c\left(t,X_t,\theta_t\right) e^{-rt}\,dt+\int_{0}^{\infty}k_{t}e^{-rt}\,d\theta_{t}\right],
\end{equation}
where $\mathcal{A}$ contains all increasing c\`{a}dl\`{a}g processes, the cost function $c(t,x,\theta)$ is convex in $\theta$ and both the intervention cost process $(k_t)_{t\geq 0}$ and the state process $(X_t)_{t\geq 0}$ do not depend on the control $(\theta_t)_{t\geq 0}$. The assumptions used in the rest of this paper are discussed in detail later in this section. 

This problem is similar, but more general than the `monotone follower' problem studied in \cite{BC}, \cite{K81}, \cite{K2} and \cite{KS} as the cost function need not have the form $c(t,x,\theta) = c(t,x-\theta)$. In particular, this class of problems contains as special cases an irreversible investment problem (see \cite{Bertola}, \cite{CH2}, \cite{Kobila}, \cite{Oks} and \cite{Pham}). When $(X_t)_{t\geq 0}$ and $(k_t)_{t\geq 0}$ are Markov processes, the typical approach to solving problems of this type is to use variational inequalities (see \cite{FS} and the references therein for the general theory). More generally, reflected BSDEs may be used to characterise the solution on a finite time horizon (see \cite{HL1} and \cite{HL2}). More recently, this problem has been studied for more general state processes and/or subject to additional constraints in \cite{Bank}, \cite{FThesis} and \cite{RiedelSu}. These later results rely heavily on the representation result introduced in \cite{BEK}. 

In Section 2, we use results from convex analysis (in particular \cite{ET}) to derive first order conditions for this class of optimisation problems. These first order conditions are both necessary and sufficient, and are shown to imply the sufficient condition derived in \cite{Bank}. In particular, our first order conditions are a generalisation of the `Maximum principle' introduced in \cite{CH} for singular control problems driven by linear diffusions. On a finite time horizon, a `Maximum principle' has also been derived for more general singular control problems driven by diffusions in \cite{BM}. Historically, there appears to have been less interest in this approach than via the associated PDEs. Potentially, this is because characterising a control satisfying the maximum principle's conditions is often not much easier than solving the original problem. Although they did not refer to these first order conditions as a maximum principle, a control satisfying these first order conditions has been characterised in \cite{Bank} and \cite{RiedelSu} using the representation result introduced in \cite{BEK}. %%%%

In fact, a connection between the representation and singular control problems is not entirely surprising, as the former is derived using a family optimal stopping problems and it is well-known that singular control and optimal stopping are intimately related (see for example \cite{KS}, \cite{BR}, \cite{B} and the references therein). In particular, this representation result appeals to the existence of a representing process (referred to as a `signal process' in \cite{BB}, \cite{BF}, \cite{CF}, \cite{FThesis}, \cite{RiedelSu} and \cite{Suo}) which is a continuous time version of Gittin's index. The Gittin's index has its origins in dynamic allocation problems (see for example \cite{GJ}, \cite{EKK1}, \cite{K}, \cite{KM}) where it is used to study the continuation region of a parameterised family of optimal stopping problems. To the best of the authors knowledge, this connection has not been exploited to provide additional examples of the representation result in \cite{BEK} except in \cite{CF} where in a finite time setting the Gittin's index is expressed in terms of the free-boundary of an associated stopping problem.

With this in mind, the main result of this paper is Theorem \ref{Theorem connection} which uses only results from the general theory of optimal stopping to show that the control satisfying the conditions of the maximum principle coincides with the Gittin's index of an associated optimal stopping problem. In particular, we use neither the representation introduced in \cite{BEK} nor any other similar results. This result has some similarities with the connection between optimal stopping games and bounded variation control in \cite{KW}. However, the case of a strictly increasing/decreasing control is not covered by their assumptions and furthermore, we are able to derive additional properties of the entire path of the optimal control. 

The results in Section 2 hold under quite general conditions on the state process and intervention costs similar to the `non-Markovian' framework used in \cite{Bank} and \cite{KW}. However, the results in Section 2 do require that the running costs depend only on the current value of the control and not the entire path, which differs from the typical approach taken with diffusions (compare with for example \cite{CF}). In Sections 3-5 we focus on the case that the state process is a L\'{e}vy process. In Section 3, we introduce a parameterised set of stopping problems of the type
\begin{equation*}
v(x,c) = \inf_{\tau \geq 0}E_x\left[ \int_0^{\tau}g(X_t)e^{-rt}\,dt +c\,e^{-r\tau}\right],
\end{equation*}
where $c\in \mathbb{R}$ and $g$ is a monotone function. The Gittin's Index associated with this problem is the smallest parameter such that it is optimal to stop immediately, i.e.
\begin{equation*}
c^*(x)=\inf\{c\in \mathbb{R}\,\vert\,v(x,c)=c\}.
\end{equation*}
This family of optimal stopping problems has classically been used to solve dynamic allocation problems (see for example \cite{EKK1}, \cite{K}, \cite{KM}) and more recently received independent attention (see for example \cite{BB}, \cite{BF}, \cite{EKK2}, \cite{Suo}). 

We derive explicit expression for the Gittin's index in terms of the Wiener-Hopf factorisation. This expression allows us to derive explicit examples of the representation studied in \cite{BEK}. These allow us to use the results from \cite{BF} to derive expressions for the value function of these optimal stopping problems, significantly extending the class of explicit examples provided in \cite{BF} and \cite{Suo}. It is perhaps interesting to note that the Gittin's index coincides with the so-called `EPV' operators studied in \cite{BY1} and \cite{BY2}. Hence \cite{BY1} and \cite{BY2} provide a wealth of additional problems where the Gittin's index plays a role. 

Due to the results in Section 2, there is a one-to-one correspondence between the explicit solutions to the stopping problems derived in Section 3 and the optimal control problem introduced in the remainder of this section. In Sections 4 and 5 we examine two examples in more detail. Section 4 focuses on the `monotone follower' problem which originates in the study of controlling the orbit of a satellite (see \cite{BC} and \cite{K2}). In this example, the running cost has an additive form where $c(t,x,\theta)=c(t,x-\theta)$. A monotone follower example features in \cite{Bank} where a guess and verify technique is used to examine the case where $c(x,x-\theta)=(x-\theta)^2/2$ and $(X_t)_{t\geq 0}$ is a Brownian motion. We are able to deal with any strictly convex $c$ and provide a very straightforward representation in the quadratic case.

The second application we examine is an irreversible investment problem. Here the running cost function takes a multiplicative form, i.e. $c(t,x,\theta)=p(\theta)q(t,x)$ where $p$ is a convex function and for each $t\geq 0$ the function $x \mapsto q(t,x)$ is monotone. This problem is studied in \cite{Bertola} and has been generalised in \cite{Kobila} and \cite{RiedelSu}, where the economics behind this problem is discussed at more length. We are able to provide explicit examples which extend beyond the `Cobb-Douglas' costs example discussed in \cite{RiedelSu}. In particular, we are not restricted to the case that the intervention costs are deterministic.

To summarise, the rest of this section introduces the framework that we use throughout. In Section 2 we derive first order conditions which are shown to be satisfied by a Gittin's index process. In Section 3 we derive explicit expressions for this Gittin's index and use it to solve a class of optimal stopping problems. Finally, in Sections 4-5 we apply these results to gain explicit expressions for the optimal control in two structurally different examples. 

Consider a set of filtered probability spaces $(\Omega, \mathcal{F}, \mathbb{F}, (P_x)_{x\in \mathbb{R}})$  and let $X=(X_t)_{t\geq 0}$ denote a L\'{e}vy process which has $X_0 = x\in \mathbb{R}$ under $P_x$ and is adapted to $\mathbb{F}$. The results in Section \ref{Section L_max} do not rely on the fact that $X$ is a L\'{e}vy process and apply more generally, however, in Sections \ref{Section L_stopping}-\ref{Section L_production} specific properties of L\'{e}vy processes are used. Take $r>0$ to be a deterministic interest rate.

\begin{definition}
\label{def_control}
A singular control $\theta$ is an increasing, adapted c\`{a}dl\`{a}g process such that
\begin{equation*}
\left\Vert \int_0^{\infty}e^{-rt}\,d\theta_t \right\Vert_{L^2(P_x)} < \infty
\end{equation*}
and $\theta_{0-}=0$. The set of all singular controls is denoted by $\mathcal{A}$.
\end{definition}

\noindent
Each singular control $\theta \in \mathcal{A}$ incurs an intervention cost $k=(k_t)_{t\geq 0}$ and a running cost described by a deterministic function $(t,x,\theta) \mapsto c(t,x,\theta)$. These costs satisfy the following assumptions. 

\medskip

\noindent
\textbf{Assumptions}.

\begin{enumerate}

\item[1)] The process $k=(k_{t})_{t \geq 0}$ is a positive c\`{a}dl\`{a}g process which satisfies 
\begin{equation*}
E_{x}\left[ \sup_{t\geq 0}\left\vert k_{t}\right\vert^{2}\right]<+\infty. 
\end{equation*}

\item[2)] For each $(t,x)\in \mathbb{R}_{+} \times \mathbb{R}$ the running cost function $\theta \mapsto c(t,x,\theta)$ has a continuous and strictly increasing first derivative with respect to $\theta$, denoted $c_{\theta}(t,x,\theta)$. It is assumed that this derivative is bounded uniformly in $t$ and $x$.

\item[3)] There exists $y\in \mathbb{R}_+$ such that for all $x\in \mathbb{R}$
\begin{equation*}
E_{x}\left[ \int_{0}^{\infty}c(t,X_t,y) e^{-rt}\,dt\right]<+\infty .
\end{equation*}

\item[4)] The indirect cost function $C: \mathbb{R}_{+} \times \mathbb{R} \times \mathbb{R}_{+} \rightarrow \mathbb{R}$ is defined using 
\begin{equation}
C(t,x,k)=\inf_{z \geq 0}\left( c(t,x,z)+rkz \right) \label{indirect cost}
\end{equation}
and it is assumed that 
\begin{equation*}
E_x\left[ \int_0^\infty C(t,X_t,k_t)e^{-rt}\,dt\right] > -\infty.
\end{equation*}
\end{enumerate}
 
The total expected cost resulting from choosing the singular control $\theta \in \mathcal{A}$ is determined by the functional $J:\mathcal{A}\rightarrow \mathbb{R} \cup \{+\infty\}$ defined as
\begin{equation}
J(\theta) =E_{x}\left[ \int_{0}^{\infty}c\left( t,X_t,\theta_t\right) e^{-rt}\,dt+\int_{0}^{\infty}k_{t-}e^{-rt}\,d\theta_{t}\right]. \label{objective function}
\end{equation}
Throughout the rest of this paper, we are interested in solving the following optimisation problem
\begin{equation}
\inf_{\theta \in \mathcal{A}}J\left( \theta \right) . \label{minimisation problem}
\end{equation}
The expression for $J$ in (\ref{objective function}) suggests a natural domain of controls $\theta$ to consider would be those such that $\int_0^\infty e^{-rt} d\theta_t$ is in $L^1(P_x)$ rather than in $L^2(P_x)$ as is imposed in Definition \ref{def_control}. However the tools used in this paper require this stronger condition, for instance to ensure that the G\^ateaux differential of $J$ exists (see Definition \ref{def Gat} and Proposition \ref{Proposition Gateaux}).

As in \cite{Bank}, the positivity of the intervention costs $k$ can be relaxed to bounded from below without any significant change to the subsequent results. Assumptions 1 and 3 ensure that the effective domain of $J$ is non-empty as there exists $y\in \mathbb{R}$ such that $J\left( \theta^{y}\right) <+\infty$ where $\theta^y_t = y$ for all $t\geq 0$. Assumption 2 states that for each $(t,x)\in \mathbb{R}_{+}\times \mathbb{R}$ the function $\theta \mapsto c(t,x,\theta)$ is convex and consequently the objective function (\ref{objective function}) is convex. The assumption that the derivative of $\theta \mapsto c(t,x,\theta)$ is bounded is imposed for technical convenience in Section \ref{Section L_max}. In the most prominent examples this does not hold, but it is straightforward to remove this assumption (see the proof of Theorem \ref{theorem monotone} in Section \ref{Section L_mono}).

Assumption 4 differs from assumption (iv) in \cite{Bank} in order to allow for running cost functions which are monotone, but still ensures that $J$ is bounded from below. To see this fix an arbitrary $\theta \in \mathcal{A}$ and observe that the objective function (\ref{objective function}) satisfies
\begin{align*}
J(\theta) &\geq E_x\left[ \int_0^{\infty}c(t,X_t,\theta_t)e^{-rt}\,dt + \int_0^{\infty}rk_t\theta_t e^{-rt}\,dt - \int_0^{\infty}rk_t\theta_t e^{-rt}\,dt \right] \\ &\geq E_x\left[ \int_0^{\infty}C(t,X_t,k_t)e^{-rt}\,dt \right] - E_x\left[\int_0^{\infty}rk_t\theta_t e^{-rt}\,dt \right] .
\end{align*}
The first term is greater than $-\infty$ due to Assumption 4. The second term satisfies
\begin{align*}
E_x\left[\int_0^{\infty}k_t\theta_t re^{-rt}\,dt \right] &\leq E_x\left[\sup_{t\geq 0}\vert k_t\vert\int_0^{\infty}\theta_t re^{-rt}\,dt \right] \leq E_x\left[\sup_{t\geq 0}\vert k_t\vert\int_0^{\infty}e^{-rt}\,d\theta_t \right] \\
&\leq \left\Vert \sup_{t\geq 0}\vert k_t\vert \right\Vert_{L^2(P_x)}\left\Vert \int_0^{\infty}e^{-rt}\,\theta_t \right\Vert_{L^2(P_x)}
\end{align*}
which is finite due to Assumption 1. Hence, these assumptions ensure that (\ref{minimisation problem}) is finite. 

%\begin{remark}
%In most of the examples in Sections \ref{Section L_mono} and \ref{Section L_production} the intervention cost process $(k_t)_{t\geq 0}$ is a constant, i.e. $k_t=K$ for all $t\geq 0$. Let $\eta(t,x,k)$ achieve the infimum in (\ref{indirect cost}), i.e.
%\begin{equation*}
%C(t,x,K) = c(t,x,\eta(t,x,K))+ rk\eta(t,x,K) \quad \forall (t,x,k) \in \mathbb{R}_+ \times \mathbb{R} \times \mathbb{R}_+ 
%\end{equation*}
%and let the process $\eta=(\eta_t)_{t\geq 0}$ be defined by $\eta_t = \eta(t,X_t,K)$ for all $t\geq 0$. Suppose that $E_x[\sup_{t\geq 0}\vert \eta_t \vert^2]< \infty$, then using integration by parts we obtain
%\begin{align*}
%J(\theta) &\geq E_x\left[\int_0^{\infty}C(t,X_t,K)e^{-rt}\,dt\right] \\
%&= E_x\left[\int_0^{\infty}c(t,X_t,\eta_t)e^{-rt}\,dt+K\int_0^{\infty}e^{-rt}\,d\eta_t\right].
%\end{align*}
%This estimate indicates that if $\eta \in \mathcal{A}$ the process $\eta$ attains the infimum in \ref{minimisation problem}, however, typically the process $\eta$ is infinite variation. Intuitively, we may guess that imposing that the control is increasing must increase overall costs in which case the optimal control will be bounded above by $\eta$. It has been shown in \cite{RiedelSu} that this is indeed the case. 
%\end{remark}

\begin{remark}
The use of c\`{a}dl\`{a}g rather than predictable controls allows the first order conditions derived in Theorem \ref{Theorem Maximum principle} to be slightly stronger than the conditions derived in \cite{CH}. This is because by choosing a c\`{a}dl\`{a}g control, it is possible to prevent the L\'{e}vy process $X$ from leaving a certain region of the state space: when $X$ jumps out of the region the control can immediately act to `push' the process back so that the net effect is the controlled process never leaves the region. It could be argued that from the perspective of applications this is not really possible: the jump of $X$ has to observed first before action can be taken. Taking this perspective means that only predictable controls are viable. When the intervention cost process is continuous this poses no additional complication as the left continuous version $(\theta_{t-})_{t \geq 0}$ of any control is predictable and yields the same payoff.
\end{remark}

\section{Maximum principle}

\label{Section L_max}

In this section we derive first order conditions for optimality in the singular control problem (\ref{minimisation problem}) where the objective function $J$ is as defined in (\ref{objective function}). These first order conditions are similar to those for one-dimensional linear diffusions derived in \cite{CH}. Firstly it will be shown that $J$ is G\^{a}teaux differentiable. Secondly, we use a result from convex analysis to derive first order conditions for optimality involving the G\^{a}teuax differential of $J$. These conditions are linked to the first order conditions derived in \cite{Bank}.

Let $\mathcal{R}^2$ denote the vector space of all c\`adl\`ag adapted processes $Z$ equipped with the norm
\begin{equation*}
\Vert Z\Vert_{\mathcal{R}^2} = \left\Vert \sup_{t\geq 0}\vert Z_t\vert \right\Vert_{L^2(P_x)} < \infty.
\end{equation*}
Let $\mathcal{V} \subset \mathcal{R}^2$ denote the space of all processes $A$ which can be decomposed as 
$A=A^+ + A^-$ where $A^{\pm}$ are processes of integrable variation, $A^-$ is predictable with $A_0^-=0$, $A^+$ is optional and purely discontinuous with no jump at infinity and
\begin{equation*}
\int_{(0,\infty]}\vert dA^-_t \vert + \int_{[0,\infty)}\vert dA^+_t\vert \in L^2(P_x).
\end{equation*}
Definition \ref{def_control} states that a singular control $\theta\in \mathcal{A}$ is a positive increasing process which is locally in $\mathcal{V}$. Associate with each $\theta\in \mathcal{A}$ a process $A^{\theta}$ defined using $dA^{\theta}_t=e^{-rt}\,d\theta_t$ for all $t\geq 0$. Due to Definition \ref{def_control} for each $\theta \in \mathcal{A}$, $A^{\theta}$ is a positive and increasing element of $\mathcal{V}$. 

Define a bilinear mapping $B: \mathcal{R}^2 \times \mathcal{A}\rightarrow \mathbb{R}$ using
\begin{equation}
B(Z,\theta) = E_x\left[\int_0^{\infty}Z_te^{-rt}\,d\theta_t\right] . \label{bilinear map}
\end{equation}
It has been shown in \cite{DM} VII.3 Theorem 65 that all of the continuous linear functionals on $\mathcal{R}^2$ can be represented in the form
\begin{equation}
\langle Z,A \rangle =E_x\left[\int_{(0,\infty]}Z_{t-}\,dA^-_t + \int_{[0,\infty)}Z_t\,dA^+_t\right]  \label{linear_Op}
\end{equation}
where $A^{\pm}$ satisfy the assumptions outlined above. Note that $B(Z,\theta)=\langle Z,A^{\theta}\rangle$ but not all of the linear operators on $\mathcal{R}^2$ can be written in this way. This bilinear mapping can be extended to allow that $Z$ is only locally in $\mathcal{R}^2$ in the obvious manner, if it is allowed to take the values $\pm \infty.$

We use the following definition of G\^{a}teaux differentiability on vector spaces which is from \cite{Werner} III.5.1.

\begin{definition}
\label{def Gat}
Let $\mathcal{X}$ be a normed vector space and take $F:\mathcal{X}\rightarrow \mathbb{R} \cup \{\pm\infty\}$. When it exists the (possibly infinite) limit
\begin{equation*}
\lim_{\lambda \downarrow 0}\frac{F(u+\lambda v)-F(u)}{\lambda}
\end{equation*}
is referred to as the directional derivative of $F$ at $u$ in the direction $v$. The functional $F$ is G\^{a}teaux differentiable at $u\in \mathcal{X}$ when $\vert F(u)\vert <\infty$ and there exists a continuous linear functional $DF(u):\mathcal{X} \rightarrow \mathbb{R}$ such that
\begin{equation*}
\lim_{\lambda \downarrow 0}\frac{F(u+\lambda v)-F(u)}{\lambda} = DF(u)(v)
\end{equation*}
for all $v\in \mathcal{X}$ such that this limit is finite. The functional $DF(u)$ is referred to as the G\^{a}teaux differential of $F$ at $u$.
\end{definition}

The next proposition provides a condition under which the objective functional $J$ is G\^{a}teaux differentiable on the set $\mathcal{A}$ introduced in Definition \ref{def_control}. Within the class of control problems studied in this paper, this G\^{a}teaux differential coincides with the subgradient process used in \cite{Bank}. A convex combination of singular controls is taken by setting 
\begin{equation}
\theta^{\lambda} = (1-\lambda)\theta + \lambda\eta
\end{equation}
for any $\theta,\eta \in \mathcal{A}$ and $\lambda \in [0,1]$. The objective functional $J$ defined in (\ref{objective function}) is extended onto the processes which are locally in $\mathcal{R}^2$ (denoted $\mathcal{R}^2_{\mathrm{loc}}$) by setting $J(\eta)=+\infty$ for $\eta \in \mathcal{R}^2_{\mathrm{loc}}\setminus \mathcal{A}$.

\begin{proposition}
\label{Proposition Gateaux} 
Take $\theta\in \mathcal{A}$ such that $J(\theta)<\infty$ then for all $\eta\in\mathcal{A}$ the objective function $J$ has a directional derivative in the direction $\eta-\theta \in \mathcal{V}$ at $\theta \in \mathcal{A}$ given by
\begin{equation*}
\lim_{\lambda \downarrow 0}\frac{J(\theta^{\lambda })-J(\theta)}{\lambda}=E_x\left[ \int_0^{
\infty}(k_{t}+p_{t}^{\theta})e^{-rt}\,d(\eta_t-\theta_t)\right],
\end{equation*}
where for each $s \geq 0$
\begin{equation}
p_s^{\theta}=E_x\left[ \left. \int_s^{\infty}c_{\theta}(u,X_u,\theta_u)e^{-r(u-s)}\,du\,\right\vert\,\mathcal{F}_{s}\right] .  \label{adjoint convex}
\end{equation}
In particular, $J$ is G\^{a}teaux differentiable at $\theta$ with G\^{a}teaux differential $DJ(\theta)(\cdot):=B(k+p^{\theta},\cdot)$.
\end{proposition}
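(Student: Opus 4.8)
The plan is to compute the directional derivative by a dominated-convergence argument, differentiating the two pieces of $J$ separately, and then to observe that the resulting linear functional is exactly $B(k+p^\theta,\cdot)$, which is continuous and linear on $\mathcal{R}^2$ by the representation theorem quoted from \cite{DM} VII.3.

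First I would write, for fixed $\theta,\eta\in\mathcal{A}$ with $J(\theta)<\infty$ and $\lambda\in(0,1]$,
\begin{equation*}
\frac{J(\theta^\lambda)-J(\theta)}{\lambda}
= E_x\!\left[\int_0^\infty \frac{c(t,X_t,\theta^\lambda_t)-c(t,X_t,\theta_t)}{\lambda}\,e^{-rt}\,dt\right]
+ E_x\!\left[\int_0^\infty k_{t-}e^{-rt}\,d(\eta_t-\theta_t)\right],
\end{equation*}
since the intervention-cost term in $J$ is linear in $\theta$, so its difference quotient is exactly the $d(\eta-\theta)$ integral and no limit is needed there. For the running-cost term, note $\theta^\lambda_t-\theta_t=\lambda(\eta_t-\theta_t)$, so by convexity of $z\mapsto c(t,x,z)$ (Assumption 2) the difference quotient $\lambda^{-1}\big(c(t,X_t,\theta^\lambda_t)-c(t,X_t,\theta_t)\big)$ is monotone in $\lambda$ and converges pointwise, as $\lambda\downarrow0$, to $c_\theta(t,X_t,\theta_t)(\eta_t-\theta_t)$ by the mean value theorem and continuity of $c_\theta$. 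I would then dominate: using that $c_\theta$ is bounded uniformly in $(t,x)$ (Assumption 2), the difference quotient is bounded in absolute value by $\|c_\theta\|_\infty|\eta_t-\theta_t|$, and $\eta_t-\theta_t\in\mathcal{V}\subset\mathcal{R}^2$ so $E_x\big[\int_0^\infty|\eta_t-\theta_t|e^{-rt}\,dt\big]<\infty$; hence dominated convergence applies and
\begin{equation*}
\lim_{\lambda\downarrow0}\frac{J(\theta^\lambda)-J(\theta)}{\lambda}
= E_x\!\left[\int_0^\infty c_\theta(t,X_t,\theta_t)(\eta_t-\theta_t)e^{-rt}\,dt\right]
+ E_x\!\left[\int_0^\infty k_{t-}e^{-rt}\,d(\eta_t-\theta_t)\right].
\end{equation*}

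Next I would rewrite the first term as an integral against $d(\eta-\theta)$ via Fubini and the definition of $p^\theta$. Writing $\eta_t-\theta_t=\int_{(0,t]}d(\eta_s-\theta_s)$ (valid since $\eta_{0-}-\theta_{0-}=0$) and interchanging the order of integration,
\begin{equation*}
\int_0^\infty c_\theta(t,X_t,\theta_t)(\eta_t-\theta_t)e^{-rt}\,dt
= \int_0^\infty\!\left(\int_s^\infty c_\theta(t,X_t,\theta_t)e^{-rt}\,dt\right)d(\eta_s-\theta_s);
\end{equation*}
taking $E_x$, conditioning on $\mathcal{F}_s$ inside the outer integral — which is legitimate because $\eta-\theta$ is adapted and of integrable variation, so one may apply the conditional Fubini/optional-projection identity $E_x[\int H_s\,dV_s]=E_x[\int {}^{\mathrm{o}}H_s\,dV_s]$ — produces $e^{-rs}p^\theta_s$ in the inner bracket, giving $E_x[\int_0^\infty p^\theta_s e^{-rs}\,d(\eta_s-\theta_s)]$. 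Combining with the intervention term yields $E_x[\int_0^\infty(k_t+p^\theta_t)e^{-rt}\,d(\eta_t-\theta_t)]$ (the distinction between $k_{t-}$ and $k_t$ being immaterial against $d(\eta-\theta)$ up to the predictable projection already built into the representation \eqref{linear_Op}), which is the claimed formula. Finally, to conclude G\^ateaux differentiability in the sense of Definition \ref{def Gat}, I would verify that $k+p^\theta\in\mathcal{R}^2_{\mathrm{loc}}$ — in fact $p^\theta\in\mathcal{R}^2$ because $c_\theta$ is bounded so $|p^\theta_s|\le \|c_\theta\|_\infty/r$, and $k\in\mathcal{R}^2$ by Assumption 1 — so that $B(k+p^\theta,\cdot)=\langle k+p^\theta,A^{(\cdot)}\rangle$ defines a continuous linear functional on $\mathcal{R}^2$ by the Dellacherie–Meyer representation, and it agrees with the directional derivative on every direction $\eta-\theta$ where the limit is finite. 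Since every $v\in\mathcal{A}$ arises as such a direction and $J\equiv+\infty$ off $\mathcal{A}$, this matches the definition.

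The main obstacle I anticipate is the justification of the conditional Fubini step, i.e. replacing $c_\theta(t,X_t,\theta_t)$ by its conditional expectation $p^\theta_s$ when integrating against the not-necessarily-continuous, not-necessarily-predictable signed measure $d(\eta-\theta)$: one must be careful that the identity $E_x[\int_0^\infty F_s\,dV_s]=E_x[\int_0^\infty E_x[F_s\mid\mathcal{F}_s]\,dV_s]$ requires $V=\eta-\theta$ to be of integrable variation (which holds, as $\eta-\theta\in\mathcal{V}$) and $F$ to be, say, bounded and measurable, and that the version of $p^\theta$ chosen is c\`adl\`ag so the integral against $d(\eta-\theta)$ is well defined pathwise. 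A secondary technical point is interchanging the outer expectation with the difference quotient limit when $J(\theta^\lambda)$ might a priori be $+\infty$ for some $\lambda$; this is ruled out because $\theta^\lambda\in\mathcal{A}$ (the set $\mathcal{A}$ is convex) and the estimates above show the running-cost term is finite, while the intervention term is finite by the same $L^2$ bound used in the introduction, so $J(\theta^\lambda)<\infty$ for all $\lambda\in[0,1]$. Everything else is a routine application of convexity, the mean value theorem, and dominated convergence.
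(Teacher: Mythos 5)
Your proposal is correct and follows essentially the same route as the paper: convexity plus the uniform bound on $c_{\theta}$ from Assumption 2 to justify dominated convergence of the running-cost difference quotient, Fubini together with the optional-projection identity (the paper cites \cite{J} Theorem 1.33 for exactly the conditional step you flag as the main obstacle) to produce $p^{\theta}$, and the bound $\Vert p^{\theta}\Vert_{\mathcal{R}^2}<\infty$ with the Dellacherie--Meyer representation to get continuity of $B(k+p^{\theta},\cdot)$. Your monotone-difference-quotient phrasing versus the paper's two-sided convexity sandwich is a cosmetic difference only.
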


\begin{proof}
The convexity of $\eta \mapsto c(t,x,\eta)$ implies that
\begin{align*}
E_x\left[\int_0^{\infty}c_{\theta}(t,X_t,\theta_t)(\eta_t-\theta_t)e^{-rt}\,dt\right] &\leq \frac{1}{\lambda}E_x\left[\int_0^{\infty}(c(t,X_t,\theta_t^{\lambda})-c(t,X_t,\theta_t))e^{-rt}\,dt\right] \\ &\leq E_x\left[\int_0^{\infty}c_{\theta}(t,X_t,\theta_t^{\lambda})(\eta_t-\theta_t)e^{-rt}\,dt\right] .
\end{align*}
Using Assumption 2 and that $\theta, \eta \in \mathcal{A}$ we can apply the dominated convergence theorem to derive that
\begin{equation*}
\lim_{\lambda \downarrow 0}\frac{1}{\lambda}E_x\left[\int_0^{\infty}(c(t,X_t,\theta_t^{\lambda})-c(t,X_t,\theta_t))e^{-rt}\,dt\right] = E_x\left[\int_0^{\infty}c_{\theta}(t,X_t,\theta_t)(\eta_t-\theta_t)e^{-rt}\,dt\right] . 
\end{equation*}
Applying Fubini's Theorem yields 
\begin{align*}
E_x\left[ \int_0^{\infty}c_{\theta}(t,X_t,\theta_t)
\int_{0}^{t}d(\eta_s-\theta_s)e^{-rt}\,dt\right] &= E_x\left[
\int_{0}^{\infty}\int_{s}^{\infty}c_{\theta}(t,X_t,\theta_t)e^{-rt}\,dt\,d(\eta_s-\theta_s)\right] \\
&= E_x\left[\int_{0}^{\infty}p_{s}^{\theta}e^{-rs}\,d(\eta_s-\theta_s)\right]
\end{align*}
where for the second equality \cite{J} Theorem 1.33 has been used. Observe that it follows from the definition of $\theta^{\lambda}$ that
\begin{align*}
\lim_{\lambda \downarrow 0}\frac{J(\theta^{\lambda})-J(\theta)}{\lambda} &=  E_x\left[\int_{0}^{\infty}p_{s}^{\theta}e^{-rs}\,d(\eta_s-\theta_s)\right]+\lim_{\lambda \downarrow 0}\frac{1}{\lambda}E_x\left[\int_0^{\infty}k_{s}e^{-rs}\,d(\theta^{\lambda}_s -\theta_s) \right] \\ &=  E_x\left[\int_0^{\infty}(p_{s}^{\theta}+k_{s})e^{-rs}\,d(\eta_s-\theta_s)\right].
\end{align*}
Under Assumption 2, $p^{\theta}\in \mathcal{R}^2$ so it follows from Assumption 1 that 
\begin{equation*}
B(k+p^{\theta},\eta-\theta) \leq \left(\Vert k \Vert_{\mathcal{R}^2}+ \Vert p^{\theta} \Vert_{\mathcal{R}^2}\right)\left(\left\Vert \int_{0}^{\infty} e^{-rt} \,d\eta_t \right\Vert_{L^{2}(P_x)} + \left\Vert \int_{0}^{\infty}e^{-rt}\,d\theta_t\right\Vert_{L^{2}(P_x)}\right)
\end{equation*}
illustrating that when $\eta,\theta \in \mathcal{A}$ the directional derivative is a bounded (and hence continuous) linear functional as required. 
\end{proof}
 
\medskip

It is a consequence of the convexity of $J$ that
\begin{equation*}
J(\eta)-J(\theta)=\frac{\lambda J(\eta)+ (1-\lambda)J(\theta)-J(\theta)}{\lambda} \geq \frac{J(\theta^{\lambda})-J(\theta)}{\lambda}.
\end{equation*}
for all $\lambda \in [0,1]$. Thus, Proposition \ref{Proposition Gateaux} shows that the directional derivative of $J$ at $\theta$ in the direction $\eta-\theta$ is a `subgradient' of the objective functional in the sense that 
\begin{equation*}
J(\eta)-J(\theta) \geq E_x\left[ \int_0^{\infty}(k_{t}+p_{t}^{\theta}) e^{-rt} \,d(\eta_t-\theta_t)\right]
\end{equation*} 
for all $\eta \in \mathcal{A}$. When $J$ is G\^{a}teaux differentiable at $\theta$ this subgradient is unique according to \cite{ET} Proposition 1.5.3. With a minor abuse of terminology, the process $DJ(\theta)=(DJ(\theta)_t)_{t\geq 0}$ defined using $DJ(\theta)_t=k_t+p^{\theta}_t$ for all $t\geq 0$ will be referred to as the subgradient process associated with $J$ as the subgradient of $J$ in an arbitrary direction can be identified using this subgradient process. The next result is part of \cite{ET} Proposition 2.2.1 and states a very simple necessary and sufficient condition for optimality.

\begin{proposition}
\label{proposition ET} The following two statements are equivalent:

\begin{enumerate}
\item[(1)] $\theta^{\ast} \in \mathcal{A}$ attains the infimum in (\ref{minimisation problem});

\item[(2)] For all $\eta \in \mathcal{A}$ such that $\vert J(\eta)\vert <\infty$ we have 
\begin{equation}
E_x\left[ \int_0^{\infty}DJ(\theta^{\ast})_{t}e^{-rt} \,d(\eta_t-\theta^{\ast}_t)\right] \geq 0 .  \label{VI}
\end{equation}
\end{enumerate}

\end{proposition}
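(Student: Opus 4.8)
The plan is to read this off as the standard convex-analytic characterisation of a minimiser of a convex functional: a point minimises $J$ over the convex set $\mathcal{A}$ exactly when all of its feasible directional derivatives there are nonnegative. The role of Proposition \ref{Proposition Gateaux} is to identify these directional derivatives with the expression $E_x[\int_0^\infty DJ(\theta^*)_t e^{-rt}\,d(\eta_t-\theta^*_t)]$. Two preliminary observations are needed. First, $\mathcal{A}$ is convex: a convex combination of increasing, adapted, c\`adl\`ag processes vanishing at $0-$ is again of this form, and the bound $\Vert\int_0^\infty e^{-rt}\,d\theta_t\Vert_{L^2(P_x)}<\infty$ is preserved by the triangle inequality, so $\theta^\lambda:=(1-\lambda)\theta^*+\lambda\eta\in\mathcal{A}$ for every $\eta\in\mathcal{A}$ and $\lambda\in[0,1]$. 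Second, the object $DJ(\theta^*)$ in statement (2) only makes sense once $J$ is G\^ateaux differentiable at $\theta^*$, so one works under the standing hypothesis $J(\theta^*)<\infty$; this is automatic under (1) because the effective domain of $J$ is non-empty by Assumptions 1 and 3.

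For the implication (1)$\Rightarrow$(2), I would fix $\eta\in\mathcal{A}$ with $|J(\eta)|<\infty$ and note that, since $\theta^*$ attains the infimum and $\theta^\lambda\in\mathcal{A}$, we have $J(\theta^\lambda)\ge J(\theta^*)$, hence $\lambda^{-1}\bigl(J(\theta^\lambda)-J(\theta^*)\bigr)\ge 0$ for all $\lambda\in(0,1]$. Letting $\lambda\downarrow 0$ and applying Proposition \ref{Proposition Gateaux} with $\theta=\theta^*$, the left-hand side converges to $E_x[\int_0^\infty (k_t+p_t^{\theta^*})e^{-rt}\,d(\eta_t-\theta^*_t)]=E_x[\int_0^\infty DJ(\theta^*)_t e^{-rt}\,d(\eta_t-\theta^*_t)]$, which is therefore $\ge 0$; this is precisely (\ref{VI}).

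For the converse (2)$\Rightarrow$(1), I would fix an arbitrary $\eta\in\mathcal{A}$; if $J(\eta)=+\infty$ the inequality $J(\eta)\ge J(\theta^*)$ is trivial, so assume $|J(\eta)|<\infty$. The convexity of $J$ gives, as recorded in the display just preceding Proposition \ref{proposition ET}, that $J(\eta)-J(\theta^*)\ge\lambda^{-1}\bigl(J(\theta^\lambda)-J(\theta^*)\bigr)$ for all $\lambda\in(0,1]$. Letting $\lambda\downarrow 0$ and again invoking Proposition \ref{Proposition Gateaux}, the right-hand side tends to $E_x[\int_0^\infty DJ(\theta^*)_t e^{-rt}\,d(\eta_t-\theta^*_t)]$, which is $\ge 0$ by hypothesis (\ref{VI}); hence $J(\eta)\ge J(\theta^*)$. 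Since this holds for every $\eta\in\mathcal{A}$, $\theta^*$ attains the infimum. There is no genuine obstacle here: the delicate points — convexity of $\mathcal{A}$, finiteness of the directional derivative, and the interchange of the limit $\lambda\downarrow 0$ with the difference quotient — have all been settled in Proposition \ref{Proposition Gateaux} and the convexity inequality above, and the only thing to check carefully is that restricting the test directions in (2) to those $\eta$ with $|J(\eta)|<\infty$ loses nothing, which is clear since any $\eta$ outside the effective domain of $J$ trivially satisfies $J(\eta)\ge J(\theta^*)$.
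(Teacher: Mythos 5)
Your argument is correct and is essentially the paper's own proof: both directions rest on Proposition \ref{Proposition Gateaux} to identify the directional derivative, with (1)$\Rightarrow$(2) following from nonnegativity of the difference quotient at a minimiser and (2)$\Rightarrow$(1) from the convexity inequality bounding that derivative above by $J(\eta)-J(\theta^*)$. The extra points you record (convexity of $\mathcal{A}$, disposing of $\eta$ with $J(\eta)=+\infty$) are harmless elaborations of what the paper leaves implicit.
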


\begin{proof}
First assume that $\theta^{\ast}$ attains the infimum in (\ref{minimisation problem}). Then $J(\theta^{\ast}) \leq J((1-\lambda)\theta^{\ast}+\lambda\eta) $ for all $\eta \in \mathcal{A}$ and all $\lambda \in [0,1]$. In particular,
\begin{equation*}
\frac{J(\theta^{\ast}+\lambda(\eta-\theta^{\ast}))-J(\theta^{\ast})}{\lambda}
\geq 0
\end{equation*}
for all $\eta \in \mathcal{A}$, hence taking the limit as $\lambda \downarrow 0$ shows that it follows from Propostion \ref{Proposition Gateaux} that $(1) \Rightarrow (2)$. Next suppose that (2) holds then due to the convexity of $J$
\begin{equation*}
0\leq E_x\left[ \int_0^{\infty}DJ(\theta^{\ast})_{t}e^{-rt} \,d(\eta_t-\theta^{\ast}_t) \right] \leq \frac{\lambda J(\eta)+ (1-\lambda)J(\theta^{\ast})-J(\theta^{\ast})}{\lambda} = J(\eta)-J(\theta^{\ast}). 
\end{equation*}
and hence $(2) \Rightarrow (1)$. 
\end{proof}

\medskip

Condition (\ref{VI}) may be interpreted as stating that there is no direction in which an incremental change in $\theta^{\ast}$ can be made which results in a lower payoff and hence $\theta^{\ast}$ is a local minimum. Since $J$ is convex, this local minimum $\theta^{\ast}$ satisfying (\ref{VI}) is a global minimum. The next result is the provides first order conditions for for the minimimsation problem (\ref{minimisation problem}) and is a generalisation of \cite{BM} Theorem 7 and \cite{CH} Theorem 4.2 to our setting. 

\begin{theorem}
\label{Theorem Maximum principle} 
The control $\theta^{\ast }\in \mathcal{A}$ is optimal for the singular stochastic control problem (\ref{minimisation problem}) if and only if
\begin{equation}
DJ(\theta^{\ast})_t \geq 0 \qquad \forall\,t\geq 0 \quad P_x\text{\textrm{-}}\mathrm{a.s.}  \label{positivity}
\end{equation}
and
\begin{equation}
E_x\left[ \int_{0}^{\infty}DJ(\theta^{\ast})_{t-}e^{-rt}\,d\theta _{t}^{\ast} \right] =0 .  \label{flat off}
\end{equation}
\end{theorem}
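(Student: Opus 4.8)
The plan is to read off both conditions from the single necessary-and-sufficient inequality (\ref{VI}) of Proposition~\ref{proposition ET} by feeding it suitably chosen competitors $\eta$; the only real work lies in the choice of competitors and in the bookkeeping of the left limits produced by the representation (\ref{linear_Op}) of the pairing. Throughout write $DJ(\theta^{\ast})_t=k_t+p^{\theta^{\ast}}_t$. First I would record, once and for all, that every competitor used below lies in $\mathcal A$ with $|J(\eta)|<\infty$: this follows routinely from the boundedness of $c_{\theta}$ in Assumption~2 (which sandwiches $c(t,x,\cdot)$ between two affine functions), the $L^{2}$-bound in Definition~\ref{def_control}, Assumption~1, and the lower bound on $J$ coming from Assumption~4.

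For the implication ``(\ref{positivity}) and (\ref{flat off}) imply optimality'', fix $\eta\in\mathcal A$ with $|J(\eta)|<\infty$ and split the pairing in (\ref{VI}) as $B(DJ(\theta^{\ast}),\eta)-B(DJ(\theta^{\ast}),\theta^{\ast})$. Since $DJ(\theta^{\ast})_t\ge 0$ for every $t$ — hence also $DJ(\theta^{\ast})_{t-}\ge 0$ — and $A^{\eta}$ is increasing, the first term is nonnegative; and the second term vanishes, this being the content of (\ref{flat off}) once $B(DJ(\theta^{\ast}),\theta^{\ast})$ is written out through (\ref{linear_Op}) (the continuous part of $A^{\theta^{\ast}}$ being predictable, the integrand there is $DJ(\theta^{\ast})_{t-}$). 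Thus (\ref{VI}) holds and Proposition~\ref{proposition ET} yields optimality of $\theta^{\ast}$.

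For the converse, assume $\theta^{\ast}$ optimal, so (\ref{VI}) holds for all admissible $\eta$. Testing with the two competitors $\eta=(1\pm\eps)\theta^{\ast}$, for which $A^{\eta}-A^{\theta^{\ast}}=\pm\eps A^{\theta^{\ast}}$, gives $\pm\eps\,B(DJ(\theta^{\ast}),\theta^{\ast})\ge 0$ and hence (\ref{flat off}). For (\ref{positivity}) I would fix a stopping time $S$ and test with $\eta=\theta^{\ast}+\eps\,\ind{t\ge S}$: the direction $\eta-\theta^{\ast}$ is purely discontinuous and optional, so by (\ref{linear_Op}) the left-hand side of (\ref{VI}) equals $\eps\,E_x\bigl[DJ(\theta^{\ast})_{S}e^{-rS}\ind{S<\infty}\bigr]$, whence $E_x[DJ(\theta^{\ast})_{S}e^{-rS}\ind{S<\infty}]\ge 0$. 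Taking $S$ to be the d\'ebut of the optional set $\{DJ(\theta^{\ast})_{t}<-\delta\}$, $\delta>0$ (a stopping time, on which $DJ(\theta^{\ast})_{S}\le-\delta$ by right-continuity), then forces $P_x(S<\infty)=0$ for every $\delta$, i.e.\ (\ref{positivity}); with (\ref{positivity}) in hand, (\ref{flat off}) just says that a nonnegative integral against $dA^{\theta^{\ast}}$ vanishes, closing the equivalence.

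The step I expect to be the main obstacle is (\ref{positivity}), and in particular keeping straight which version — $DJ(\theta^{\ast})_t$ or $DJ(\theta^{\ast})_{t-}$ — a given perturbation actually tests. The representation (\ref{linear_Op}) pairs $Z_{t-}$ with the predictable part of a finite-variation process and $Z_t$ with its purely-discontinuous optional part, so a perturbation acting at a predictable time only probes $DJ(\theta^{\ast})_{t-}$; it is precisely because Definition~\ref{def_control} allows c\`{a}dl\`{a}g rather than merely predictable controls that one may perturb at an arbitrary stopping time and thereby obtain $DJ(\theta^{\ast})_t\ge 0$ for \emph{every} $t$, not just for its left limits — this is the sense in which the present first-order conditions strengthen those of \cite{CH}. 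Everything else (admissibility of the competitors, the Fubini and dominated-convergence manipulations underlying the pairing, the d\'ebut argument, and the identification of (\ref{flat off}) with the vanishing of a nonnegative integral once (\ref{positivity}) is known) is routine.
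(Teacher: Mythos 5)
Your proposal is correct, and while the sufficiency half coincides with the paper's argument, your necessity half takes a genuinely different route. The paper extracts (\ref{positivity}) and (\ref{flat off}) from perturbations built on the random sets where $DJ(\theta^{\ast})_{t-}<0$: first $d\varphi_t=\mathbb{I}_{[DJ(\theta^{\ast})_{t-}<0]}\,d\theta^{\ast}_t$, which removes the mass of $\theta^{\ast}$ charged where $DJ(\theta^{\ast})\geq 0$ and yields $B(DJ(\theta^{\ast})^{+},\theta^{\ast})=0$; then $d\nu_t=d\theta^{\ast}_t+\mathbb{I}_{[DJ(\theta^{\ast})_{t-}<0]}\,dt$, which gives $DJ(\theta^{\ast})_t\geq 0$ only for Lebesgue-a.e.\ $t$; and finally a third, somewhat delicate, perturbation supported on $\Lambda=\{t\geq 0\,\vert\,DJ(\theta^{\ast})_t^{-}>0\}$ to upgrade this to all $t$ and to convert $B(DJ(\theta^{\ast})^{+},\theta^{\ast})=0$ into (\ref{flat off}). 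Your competitors — the scalings $(1\pm\eps)\theta^{\ast}$ for (\ref{flat off}) and the single jump $\theta^{\ast}+\eps\ind{t\geq S}$ at an arbitrary stopping time $S$, followed by a d\'ebut/right-continuity argument, for (\ref{positivity}) — buy two things: (\ref{flat off}) drops out in one line without first establishing positivity, and the jump perturbation makes completely transparent why the condition holds for the c\`adl\`ag value $DJ(\theta^{\ast})_t$ and not merely its left limits, which is precisely the strengthening over \cite{CH} discussed in the remark after the theorem. The only points to keep explicit are the ones you already flag: the d\'ebut of $\{DJ(\theta^{\ast})_t<-\delta\}$ is a stopping time under the usual conditions on the filtration (since $DJ(\theta^{\ast})$ is c\`adl\`ag adapted), the admissibility of each competitor and the finiteness of $J$ there follow from Assumptions 1--2 and the $L^2$ condition in Definition \ref{def_control}, and the $Z_t$ versus $Z_{t-}$ bookkeeping in (\ref{linear_Op}) — on which the paper itself is not entirely consistent — must be tracked exactly as you do.
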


\begin{proof}
Assume that (\ref{positivity}) and (\ref{flat off}) hold, then for an arbitrary $\eta$ such that $J(\eta)<\infty$ we have
\begin{equation*}
B(DJ(\theta^{\ast}),\eta-\theta^{\ast}) = B(DJ(\theta^{\ast}),\eta)-B(DJ(\theta^{\ast}),\theta^{\ast}) =B(DJ(\theta^{\ast}),\eta) \geq 0
\end{equation*}
which is (\ref{VI}). Hence it follows from Proposition \ref{proposition ET} that (\ref{positivity}) and (\ref{flat off}) imply that $\theta^{\ast}$ attains the infimum in (\ref{minimisation problem}). 

Now suppose that $\theta^{\ast}$ is optimal, it was shown in Proposition \ref{proposition ET} that the optimal control $\theta^{\ast}$ satisfies (\ref{VI}). Consider a singular control $\varphi$, defined via $d\varphi _{t}=\mathbb{I}_{[DJ(\theta^{\ast})_{t-}<0]}\,d\theta^{\ast}_t$. For this particular control, (\ref{VI}) implies that
\begin{equation*}
0 \leq B(DJ(\theta^{\ast}),\varphi-\theta^{\ast}) = B(-DJ(\theta^{\ast}) \mathbb{I}_{[DJ(\theta^{\ast})\geq 0]},\theta^{\ast}) .
\end{equation*}
To avoid a contradiction the right-hand side must be equal to zero, from
which we deduce that 
\begin{equation}
B(DJ(\theta^{\ast})^+,\theta^{\ast})=0. \label{pre flat off}
\end{equation} 

Consider another  singular control $\nu$ defined using $d\nu_t=d\theta^{\ast}_t+\mathbb{I}_{[DJ(\theta^{\ast})_{t-}<0]}\,dt$. For this control, (\ref{VI}) implies that
\begin{equation*}
0 \leq B(DJ(\theta^{\ast}),\nu -\theta^{\ast}) = E_x\left[\int_0^{\infty}DJ(\theta^{\ast})_{t-}
\mathbb{I}_{[DJ(\theta^{\ast})_{t}< 0]}e^{-rt}\,dt\right]
\end{equation*}
and to avoid a contradiction $DJ(\theta^{\ast})_t\geq 0$ for almost every $t\geq 0$. 

Let $\Lambda := \{ t\geq 0\,\vert\,DJ(\theta^{\ast})_{t}^{-}>0\}$ and for any $\theta \in \mathcal{A}$ suppose that $\widehat{\theta}$ satisfies $d\widehat{\theta}_t =\mathbb{I}_{\Lambda}(t)\,d\theta_{t}$. This construction ensures that $B(DJ(\theta^{\ast})^+,\widehat{\theta})=0$. Consequently, combining 
(\ref{VI}) and (\ref{pre flat off}) yields
\begin{equation}
0 \leq B(DJ(\theta^{\ast})^+ - DJ(\theta^{\ast})^-,\widehat{\theta}-\theta^{\ast}) = B(DJ(\theta^{\ast})^-,\theta^{\ast}-\widehat{\theta}) \label{step max1}.
\end{equation}
Take $\widehat{\phi} \in \mathcal{A}$ such that $P_x(\int_0^{\infty}\mathbb{I}_{\Lambda}(t)\,d(\widehat{\phi}_t - \theta^{\ast}_t)>0)=1$. For this particular $\widehat{\phi} \in \mathcal{A}$
\begin{equation}
B(DJ(\theta^{\ast})^-,\theta^{\ast}-\widehat{\phi}) = -B(DJ(\theta^{\ast})^-\mathbb{I}_{\Lambda},\theta^{\ast}-\widehat{\phi}) < 0
\end{equation}
which contradicts (\ref{step max1}) unless $DJ(\theta^{\ast})_{t}^{-}=0$ $P_x$-a.s. Hence thia proves that if $\theta^{\ast}$ is optimal then (\ref{positivity}) follows. Furthermore, this observation combined with (\ref{pre flat off}) implies that (\ref{flat off}) holds.
\end{proof}

\medskip

The conditions in the previous theorem are stronger than the conditions found in \cite{CH} because here the singular controls are c\`{a}dl\`{a}g processes rather than predictable therefore the optimal subgradient process may not jump into the negative half-plane as $\theta^{\ast}$ may immediately adjust to prevent this. 

\begin{remark}
\label{remark intuition}
The G\^{a}teaux differential $DJ(\theta)_t=k_t + p_t^{\theta}$ can be interpreted as the marginal cost of an increase in the control $\theta$ at time $t\geq 0$. A small increase in $\theta_t$ will incur intervention cost $k_t$ and have a marginal impact on all of the future running costs, namely $p^{\theta}_t$. When $DJ(\theta)_t > 0$ the intervention costs exceed the marginal benefits of increasing $\theta_t$ which suggests that a smaller total cost can be achieved by leaving $\theta_t$ at its current level. Similarly when $DJ(\theta)_t \leq 0$ the intervention costs are less than the marginal benefits of increasing $\theta_t$ which suggests that smaller total cost can be achieved by making a small increase in $\theta_t$. Condition (\ref{positivity}) tells us that the optimal control ensures that we never enter the region where the marginal benefit of increasing $\theta^*$ exceeds the intervention costs. Moreover, condition (\ref{flat off}) tells us that $\theta^*$ is the smallest control which satisfies (\ref{positivity}).
\end{remark}

The next corollary shows that Theorem \ref{Theorem Maximum principle} contains as a special case \cite{Bank} Theorem 2.2 with no upper limit on the amount of control that can be used.

\begin{corollary}
\label{corollary Snell}
The optimal control $\theta^{\ast}\in\mathcal{A}$ for the singular stochastic control problem (\ref{minimisation problem}) is such that $\theta^{\ast}$ is flat-off the set
\begin{equation*}
\left\{t\geq 0\,\left\vert\,DJ(\theta^{\ast})_{t}=S(\theta^{\ast})_{t}\right. \right\}
\end{equation*}
where the process $S(\theta^{\ast})$ is the `Snell envelope' of the optimal subgradient process $DJ(\theta^{\ast})$, i.e.
\begin{equation}
S(\theta^{\ast})_{\tau}=\essinf{\sigma \geq \tau} E_x\left[\left. DJ(\theta^{\ast})_{\sigma} e^{-r(\sigma-\tau)} \,\right\vert\, \mathcal{F}_{\tau}\right] \label{snell}
\end{equation}
for all stopping times $\tau$.
\end{corollary}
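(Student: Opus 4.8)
The proof combines the two conclusions of Theorem \ref{Theorem Maximum principle} with two elementary facts about the process in (\ref{snell}). First one should record that $DJ(\theta^{\ast})=k+p^{\theta^{\ast}}$ lies in $\mathcal{R}^{2}$ (by Assumption~1 for $k$, and by the estimate in the proof of Proposition \ref{Proposition Gateaux} for $p^{\theta^{\ast}}$), so that the essential infimum in (\ref{snell}) aggregates in the usual way into a c\`adl\`ag adapted process $S(\theta^{\ast})$; the zero set of the optional process $DJ(\theta^{\ast})-S(\theta^{\ast})$ is then optional, so that being flat-off it is a meaningful statement.

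The heart of the matter is to replace the left limit in the flat-off condition (\ref{flat off}) by the value of $DJ(\theta^{\ast})$. The proof of Theorem \ref{Theorem Maximum principle} actually establishes (\ref{pre flat off}), namely $B(DJ(\theta^{\ast})^{+},\theta^{\ast})=0$, and (\ref{positivity}) forces $DJ(\theta^{\ast})^{+}=DJ(\theta^{\ast})$; hence $B(DJ(\theta^{\ast}),\theta^{\ast})=0$, which by the definition (\ref{bilinear map}) of $B$ says exactly that $E_x[\int_{0}^{\infty}DJ(\theta^{\ast})_{t}e^{-rt}\,d\theta^{\ast}_{t}]=0$. Since $\theta^{\ast}$ is increasing and $DJ(\theta^{\ast})_{t}\ge 0$ for every $t$ by (\ref{positivity}), the integrand is non-negative, and therefore $DJ(\theta^{\ast})_{t}=0$ for $d\theta^{\ast}_{t}$-almost every $t$, $P_x$-a.s.

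It then remains to sandwich the Snell envelope: for any stopping time $\tau$, taking the competitor $\sigma=\tau$ in (\ref{snell}) gives $S(\theta^{\ast})_{\tau}\le E_x[DJ(\theta^{\ast})_{\tau}\mid\mathcal{F}_{\tau}]=DJ(\theta^{\ast})_{\tau}$ (using that $DJ(\theta^{\ast})$ is adapted), while $S(\theta^{\ast})_{\tau}\ge 0$ because every conditional expectation appearing in (\ref{snell}) is non-negative by (\ref{positivity}). Combining with the previous step, at $d\theta^{\ast}_{t}$-almost every $t$ we have $0\le S(\theta^{\ast})_{t}\le DJ(\theta^{\ast})_{t}=0$, so $DJ(\theta^{\ast})_{t}=S(\theta^{\ast})_{t}=0$ there; hence $d\theta^{\ast}$ is carried by $\{t\ge 0:DJ(\theta^{\ast})_{t}=S(\theta^{\ast})_{t}\}$, i.e.\ $\theta^{\ast}$ is flat-off this set.

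The point requiring care, and the only genuine obstacle, is the insistence on the value $DJ(\theta^{\ast})_{t}$ rather than the left limit $DJ(\theta^{\ast})_{t-}$ of (\ref{flat off}): since $d\theta^{\ast}$ may carry atoms it can charge jump times of both $DJ(\theta^{\ast})$ and $S(\theta^{\ast})$, and the left-limit condition would only yield $S(\theta^{\ast})_{t-}=DJ(\theta^{\ast})_{t-}=0$ for $d\theta^{\ast}$-a.e. $t$. This is exactly why one appeals to (\ref{pre flat off}). A self-contained alternative is to test the optimality inequality (\ref{VI}) against the controls obtained from $\theta^{\ast}$ by scaling all of its jumps by a common factor $1-\eps$ (these lie in $\mathcal{A}$, since $\sum_{s\le t}\Delta\theta^{\ast}_{s}\le\theta^{\ast}_{t}$ because $\theta^{\ast}_{0-}=0$); together with (\ref{positivity}) this again forces $DJ(\theta^{\ast})$ to vanish $d\theta^{\ast}$-a.e., and the rest of the argument is unchanged.
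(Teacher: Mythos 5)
Your proof is correct, and it takes a mildly but genuinely different route from the paper's. The paper first proves the stronger intermediate fact that the Snell envelope vanishes identically: since $DJ(\theta^{\ast})\in\mathcal{R}^2$, one has $e^{-rt}DJ(\theta^{\ast})_t\rightarrow 0$, so letting $\sigma\rightarrow\infty$ in (\ref{snell}) gives $S(\theta^{\ast})_{\tau}\leq 0$, while (\ref{positivity}) gives $S(\theta^{\ast})_{\tau}\geq 0$; the flat-off property is then read off from (\ref{flat off}). You instead sandwich $0\leq S(\theta^{\ast})_{\tau}\leq DJ(\theta^{\ast})_{\tau}$ by taking the competitor $\sigma=\tau$, and extract from (\ref{pre flat off}) together with (\ref{positivity}) that $DJ(\theta^{\ast})_t=0$ for $d\theta^{\ast}_t$-almost every $t$, which immediately places the support of $d\theta^{\ast}$ inside $\{DJ(\theta^{\ast})=S(\theta^{\ast})\}$. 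Your version buys two things: it dispenses with the transversality argument at infinity (which in the paper requires a limit interchange justified by $\sup_t\vert DJ(\theta^{\ast})_t\vert\in L^2(P_x)$), and it confronts directly the mismatch between the left limit $DJ(\theta^{\ast})_{t-}$ in (\ref{flat off}) and the value $DJ(\theta^{\ast})_t$ in the corollary's set --- a point the paper's closing phrase ``the result follows from (\ref{flat off})'' passes over; your appeal to (\ref{pre flat off}), which is stated in terms of the value rather than the left limit, and your jump-rescaling alternative both resolve it cleanly. What your argument does not deliver is the global identity $S(\theta^{\ast})\equiv 0$, which the paper establishes here and then reuses in the proof of Theorem \ref{Theorem connection}; if one wants the corollary to serve that later purpose, the paper's extra step (or your sandwich applied at an arbitrary stopping time together with the limit argument) should be retained.
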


\begin{proof}
It follows from the definition (\ref{adjoint convex}) and Assumption 1 that 
\begin{equation*}
\lim_{t\rightarrow\infty}e^{-rt}DJ(\theta^{\ast})_t = \lim_{t\rightarrow\infty}e^{-rt}p^{\theta^{\ast}}_t  + \lim_{t\rightarrow\infty}k_t e^{-rt} =0 .
\end{equation*}
Hence, $S(\theta^{\ast})_{\tau} \leq E_x\left[ \left. \lim_{t \rightarrow +\infty}DJ(\theta^{\ast})_{t}e^{-r(t-\tau)}\,\right\vert\, \mathcal{F}_{\tau}\right]=0$ for any finite stopping time $\tau$ so the Snell envelope takes values in $(-\infty ,0]$. As $\theta^{\ast}$ is optimal it follows from (\ref{positivity}) that $DJ(\theta^{\ast})_{\sigma}\geq 0$ for all $\sigma \geq \tau$. Consequently, $S(\theta^{\ast})_{\tau}=0$ for all stopping times $\tau$ so the result follows from (\ref{flat off}). 
\end{proof}

\medskip

Actually we may go one step further and link the first order conditions of Theorem \ref{Theorem Maximum principle} to a different family of optimal stopping problems. This result is then an extension of the connection between optimal stopping and singular stochastic control identified in \cite{KS} for the `monotone follower problem' driven by Brownian motion. A similar connection between bounded variation control and optimal stopping games is identified in \cite{KW}. 

\begin{theorem}
\label{Theorem connection}
Define a family of optimal stopping problems indexed by $l\in \mathbb{R}$ via
\begin{equation}
u(t;l)e^{-rt} := \essinf{\sigma \geq t}E_x\left[\left.k_{\sigma}e^{-r\sigma}-\int_t^{\sigma}c_{\theta}(u,X_u,l)e^{-ru}\,du\,\right\vert\,\mathcal{F}_t \right] \label{family of probs}. 
\end{equation}
Suppose that $\theta^*\in \mathcal{A}$ is optimal for the singular control problem (\ref{minimisation problem}) then
\begin{equation}
S(\theta^*)_t = u(t,\theta^*_t) + p_t^{\theta^*} \quad \forall t\geq 0. \label{coincide}
\end{equation}
For each $t\geq 0$, let 
\begin{equation}
l_t = \sup\{ l\in\mathbb{R}\,\vert\,u(t;l)=k_t\} \label{parameter}
\end{equation}
where we understand $\sup \emptyset=-\infty$. Let $\widehat{\theta}_t = \sup_{0 \leq u \leq t} l_u \vee 0$ for all $t \geq 0$ and suppose that $\widehat{\theta}\in \mathcal{A}$. Then, the singular control $\widehat{\theta}$ attains the infimum in (\ref{minimisation problem}).
\end{theorem}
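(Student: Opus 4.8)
The plan is to verify that $\widehat\theta$ satisfies the two conditions of Theorem \ref{Theorem Maximum principle}, namely that $DJ(\widehat\theta)_t = k_t + p_t^{\widehat\theta} \geq 0$ for all $t$ and that $\widehat\theta$ is flat off the set $\{t : DJ(\widehat\theta)_t = 0\}$ (the latter being (\ref{flat off}), which by Corollary \ref{corollary Snell} is equivalent to $\widehat\theta$ being flat off $\{t: DJ(\widehat\theta)_t = S(\widehat\theta)_t\}$ once one knows $S(\widehat\theta) \equiv 0$; but more directly I would aim to show the Snell envelope of $DJ(\widehat\theta)$ is identically zero and that $\widehat\theta$ increases only where $DJ(\widehat\theta) = 0$). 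First I would establish the identity (\ref{coincide}) in a form that does not presuppose optimality: for any $\theta \in \mathcal{A}$, adding and subtracting $\int_0^t c_\theta(u,X_u,\theta_u)e^{-ru}\,du$ inside the essinf defining $u(t;l)e^{-rt}$ and using the tower property on $p^\theta$, one gets
\begin{equation*}
u(t;\theta_t)e^{-rt} + p_t^{\theta}e^{-rt} = \essinf{\sigma \geq t} E_x\left[\left. (k_\sigma + p_\sigma^{\theta})e^{-r\sigma} - \int_t^\sigma \big(c_\theta(u,X_u,\theta_u) - c_\theta(u,X_u,l)\big)e^{-ru}\,du\,\right|\mathcal{F}_t\right]\Big|_{l=\theta_t},
\end{equation*}
so that when evaluated along a control which is flat off $\{u : \theta_u = \theta_t\}$ on the relevant interval the correction term vanishes and the right side becomes the Snell envelope $S(\theta)_t$ of $DJ(\theta)$. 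The key structural input is that $\widehat\theta_t = \sup_{u\le t} l_u \vee 0$ is exactly such a control: it is constant on any interval where it does not hit a new running maximum of $l$, and by construction $\widehat\theta$ increases at $t$ only if $l_t = \widehat\theta_t$.

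Next I would read off the two conditions from the definition (\ref{parameter}) of $l_t$ and monotonicity of $c_\theta$ in its last argument (Assumption 2, via the monotonicity of $g$ alluded to in Section 3). The point $l_t$ is the largest $l$ with $u(t;l) = k_t$; since $u(t;l) \le k_t$ always (take $\sigma = t$ in (\ref{family of probs})) and $l\mapsto u(t;l)$ is monotone in the appropriate direction, $u(t;l) = k_t$ precisely on a half-line, and $l \le l_t \iff$ it is optimal to stop immediately in the $l$-th problem. The positivity condition $DJ(\widehat\theta)_t \ge 0$ should follow because $\widehat\theta_t \ge l_t$ forces the $\widehat\theta_t$-problem to have immediate stopping optimal, i.e. $u(t;\widehat\theta_t) = k_t$, which by (\ref{coincide})-type algebra gives $S(\widehat\theta)_t = k_t + p_t^{\widehat\theta} = DJ(\widehat\theta)_t$; and since a Snell envelope of a process tending to $0$ at infinity is $\le 0$ while here... — here I need the reverse inequality, so I would instead argue that $u(t;\widehat\theta_t) = k_t$ together with the representation shows $DJ(\widehat\theta)_t = S(\widehat\theta)_t$, and separately that $DJ(\widehat\theta)_t \geq 0$ because the essinf in (\ref{family of probs}) at $\sigma = t$ equals $k_t$ and $u(t;\widehat\theta_t)=k_t$ means this lower bound is attained, forcing the integrand comparison to be nonnegative along optimal $\sigma$. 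For the flat-off condition, on the support of $d\widehat\theta$ we have $\widehat\theta_t = l_t$, hence $u(t;l_t) = k_t$ is a boundary case where it is optimal both to stop and (by continuity / the definition of $l_t$ as a supremum) to continue, which should pin $DJ(\widehat\theta)_t = S(\widehat\theta)_t = 0$ on that set, giving (\ref{flat off}).

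Finally I would check the technical hypotheses: that $\widehat\theta \in \mathcal{A}$ is assumed in the statement, so only adaptedness and càdlàg-ness of $t \mapsto \sup_{u\le t} l_u \vee 0$ need comment — adaptedness reduces to measurability of $(t,\omega)\mapsto l_t(\omega)$, which follows from $l_t$ being defined via an essinf of a family of conditional expectations of a jointly measurable integrand (standard, using a countable dense parametrisation in $l$ and right-continuity). I expect the main obstacle to be the bookkeeping in the first step: making the interchange of the essinf with the evaluation $l = \theta_t$ rigorous, i.e. justifying that plugging the $\mathcal{F}_t$-measurable random variable $\widehat\theta_t$ into the parametrised value function $u(t;\cdot)$ is legitimate, and that the "correction integral" $\int_t^\sigma (c_\theta(u,X_u,\widehat\theta_u) - c_\theta(u,X_u,\widehat\theta_t))e^{-ru}\,du$ genuinely vanishes for the $\sigma$'s that matter — this needs the fact that $\widehat\theta$ does not move strictly between $t$ and the optimal stopping time of the $\widehat\theta_t$-problem, which is where the running-supremum structure of $\widehat\theta$ and the general theory of optimal stopping (continuation region = where $u(t;l) < k_t$) must be combined carefully.
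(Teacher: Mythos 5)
Your overall plan --- verify the two first-order conditions of Theorem \ref{Theorem Maximum principle} directly for $\widehat{\theta}$ --- is structurally different from the paper's argument, and the difference is where the gap lies: the paper never verifies positivity of $DJ(\widehat{\theta})$ from scratch. It \emph{assumes} an optimal $\theta^*$ exists (this is a hypothesis of the theorem), so that $DJ(\theta^*)\geq 0$ and the flat-off condition are already available from Theorem \ref{Theorem Maximum principle}, and then proves $\widehat{\theta}=\theta^*$ pathwise by sandwiching the strict level-passage times $\sigma^+(l)$ of $\theta^*$ between $\sigma^*(0,l)$ and $\lim_{k\downarrow l}\sigma^*(0,k)$ (Steps 1--5) and taking a countable union over rational levels (Step 6). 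Your proposal discards the hypothesis on $\theta^*$ and therefore must prove $k_t+p_t^{\widehat{\theta}}\geq 0$ directly; this is exactly where your sketch visibly breaks down. The inequality you get for free from the decomposition, namely $u(t;\widehat{\theta}_t)+p_t^{\widehat{\theta}}\geq S(\widehat{\theta})_t$ (the correction integral is nonnegative because $\widehat{\theta}_u\geq\widehat{\theta}_t$), is useless for positivity since $S(\widehat{\theta})_t\leq 0$ automatically, and $k_t\geq u(t;\widehat{\theta}_t)$ points the wrong way. Establishing $DJ(\widehat{\theta})\geq 0$ for a running-supremum control without an a priori optimal control is essentially one direction of the Bank--El Karoui representation of $k$, which is precisely the input the paper is structured to avoid. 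Two local slips as well: the correction term enters with a plus sign, not a minus; and $\widehat{\theta}_t\geq l_t$ gives $u(t;\widehat{\theta}_t)\leq k_t$, not equality (equality holds only at times where $l$ attains a new running maximum).

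The second gap is the passage from your decomposition to the identity (\ref{coincide}) and to the flat-off condition. Since the correction is nonnegative for every stopping time, you only obtain one inequality; the reverse requires that the essential infimum defining the Snell envelope be (nearly) attained by stopping times occurring \emph{before} the next increase of the control. For $\theta^*$ the paper extracts this from the flat-off condition (\ref{inter_flat}) in its Step 2, and then closes the identity via an $\varepsilon$-shift of the parameter together with continuity of $l\mapsto u(t;l)$ (Steps 3--4, which your sketch omits entirely). For $\widehat{\theta}$ the flat-off property is the conclusion, not a hypothesis, so invoking it to locate the optimal stopping time of $S(\widehat{\theta})_t$ is circular. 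You correctly flag this interaction as ``the main obstacle'', but it is not bookkeeping: it is the mathematical content of the theorem, and the proposal does not contain an argument for it.
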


\begin{proof}
The proof is quite lengthy so is broken down into a number of steps,  however, we first need to introduce some notation including an additional family of auxiliary optimal stopping problems. Let $\sigma^{\ast}(t,l):=\inf\{s\geq t\,\vert\,u(s;l)=k_s\}$. The general theory of optimal stopping (see Theorem D.9 \cite{KS_Fin}) states that this stopping time attains the essential infimum in (\ref{family of probs}). For $l\geq 0$ define another family of optimal stopping problems using
\begin{equation}
S(\theta^* \vee l)_t e^{-rt} := \essinf{\sigma\geq t}E_x\left[\left. DJ(\theta^*\vee l)_{\sigma}e^{-r\sigma}\,\right\vert\,\mathcal{F}_t\right] \label{S_l family}
\end{equation}
for each $t\geq 0$. Since $\theta^*_t \vee l\geq \theta_t^*$ for all $t\geq 0$, $c_{\theta}(t,X_t,\theta^*_t\vee l)\geq c_{\theta}(t,X_t,\theta^*_t)$ for all $t\geq 0$ which implies that $p_t^{\theta^*\vee l}\geq p_t^{\theta^*}$. Consequently, 
\begin{equation*}
DJ(\theta^*\vee l)_t := k_t + p_t^{\theta^*\vee l}\geq k_t + p_t^{\theta^*} = DJ(\theta^*)_t \geq 0
\end{equation*}
where the final inequality follows from (\ref{positivity}) due to the assumed optimality of $\theta^*$. Hence, the argument in the proof of Corollary \ref{corollary Snell} implies that $S(\theta^*\vee l)_t =0$ for all $t\geq 0$. Hence, the essential infimum in (\ref{S_l family}) is attained by the stopping times $\sigma(t,l):=\inf\{s\geq t\,\vert\,DJ(\theta^{\ast}\vee l)_s =0\}$.

Step 1: We aim to show that for each $(t,l)\in \mathbb{R}_+ \times \mathbb{R}_+$, $\sigma^*(t,\theta_t^*\vee l)\leq \sigma(t,l)$. For arbitrary $(t,l)\in \mathbb{R}_+ \times \mathbb{R}$, it follows from the definition (\ref{S_l family}) that
\begin{align*}
S(\theta^* \vee l)_t e^{-rt} &= \essinf{\sigma\geq t}E_x\left[\left. k_{\sigma}e^{-r\sigma}+\int_{\sigma}^{\infty}c_{\theta}(u,X_u,\theta_u^*\vee l)e^{-ru}\,du\,\right\vert\,\mathcal{F}_t\right] \\
&= \essinf{\sigma\geq t}E_x\left[\left. k_{\sigma}e^{-r\sigma}-\int_t^{\sigma}c_{\theta}(u,X_u,\theta_u^*\vee l)e^{-ru}\,du\,\right\vert\,\mathcal{F}_t\right] + p_t^{\theta^*\vee l}e^{-rt} \\
&\leq \essinf{\sigma\geq t}E_x\left[\left. k_{\sigma}e^{-r\sigma}-\int_t^{\sigma}c_{\theta}(u,X_u,\theta^*_t\vee l)e^{-ru}\,du\,\right\vert\,\mathcal{F}_t\right] + p_t^{\theta^*\vee l}e^{-rt} \\ &= (u(t,\theta^*_t\vee l)+p_t^{\theta^*\vee l})e^{-rt}.
\end{align*}
where the second equality follows from the definition of $p^{\theta^*\vee l}$, the inequality uses that $\theta^*_u\vee l\geq \theta_t^* \vee l$ as well as the assumption that $l\mapsto c_{\theta}(\cdot,\cdot,l)$ is strictly increasing. Hence we can conclude that
\begin{equation}
0=S(\theta^* \vee l)_t \leq u(t,\theta_t^*\vee l)+p_t^{\theta^*\vee l} \label{step_1}
\end{equation}
for each $t\geq 0$. Take $k \geq l$, then it follows from Assumption 2 that 
\begin{align*}
u(t;l)e^{-rt} &= E_x\left[\left.k_{\sigma^{\ast}(t,l)}e^{-r\sigma^{\ast}(t,l)}-\int_t^{\sigma^{\ast}(t,l)}c_{\theta}(u,X_u,l)e^{-ru}\,du\,\right\vert\,\mathcal{F}_t \right] \\ &\geq E_x\left[\left.k_{\sigma^{\ast}(t,l)}e^{-r\sigma^{\ast}(t,l)}-\int_t^{\sigma^{\ast}(t,l)}c_{\theta}(u,X_u,k)e^{-ru}\,du\,\right\vert\,\mathcal{F}_t \right] \geq u(t;k)e^{-rt}.
\end{align*}
for each $t\geq 0$. We conclude that for each $t\geq 0$, the mapping $l\mapsto u(t,l)$ is monotonically decreasing. The stopping times $\sigma(t,l)$ attain (\ref{S_l family}) so (\ref{step_1}) implies
\begin{align}
0=DJ(\theta^* \vee l)_{\sigma(t,l)}&=k_{\sigma(t,l)} +p_{\sigma(t,l)}^{\theta^*\vee l} \notag \\ &\leq u(\sigma(t,l),\theta^*_{\sigma(t,l)}\vee l)+p_{\sigma(t,c)}^{\theta^*\vee l} \notag \\
&\leq u(\sigma(t,l),\theta^*_t\vee l)+p_{\sigma(t,c)}^{\theta^*\vee l} \label{inter_1}
\end{align}
where the final inequality follows since $l\mapsto u(\sigma(t,l),l)$ is decreasing. Moreover, the stopping times $\sigma(t,l)$ are suboptimal for the family of stopping problems (\ref{family of probs}) hence $u(\sigma(t,l),\theta^*_t\vee l) \leq k_{\sigma(t,l)}$. Hence, it follows from (\ref{inter_1}) that $u(\sigma(t,l),\theta^*_t\vee l) = k_{\sigma(t,l)}$ which implies that $\sigma^*(t,\theta_t^*\vee l)\leq \sigma(t,l)$.

Step 2: Let $\sigma^+(l):= \inf\{t\geq 0\,\vert\,\theta^*_t>l\}$ denote the strict level passage time of the optimal control. In this step we show that $\sigma(0,l)\leq \sigma^+(l)$ for all $l\geq 0$. It follows from the first order conditions derived in Theorem \ref{Theorem Maximum principle} that for any stopping time $\tau$
\begin{equation}
P_x\left(\int_{\tau}^{\infty}DJ(\theta^*)_t
e^{-rt}\,d\theta_t^* =0 \right)=1. \label{inter_flat}
\end{equation}
Let $\sigma(l):=\inf\{t\geq 0\,\vert\,\theta_t^*\geq l\}$ so that for all $t\geq \sigma(l)$, $\theta_t^*\vee l =\theta_t^*$ and $DJ(\theta^*\vee l)_t = DJ(\theta^*)_t$. Moreover, 
\begin{align*}
\int_{\sigma(l)}^{\infty}DJ(\theta^*)_t e^{-rt}\,d\theta_t^* &= \int_{\sigma(l)}^{\infty} DJ(\theta^*\vee l)_t e^{-rt}\,d\theta_t^* \\
&=\int_0^{\infty}DJ(\theta^*\vee l)_t\mathbb{I}_{[t\geq \sigma(l)]} e^{-rt}\,d\theta_t^* \\
&=\int_0^{\infty}DJ(\theta^*\vee l)_t\mathbb{I}_{[\theta^*_t\geq l]} e^{-rt}\,d\theta_t^* \\ &=\int_0^{\infty}DJ(\theta^*\vee l)_t e^{-rt}\,d(\theta_t^*\vee l)
\end{align*}
and hence it follows from (\ref{inter_flat}) that
\begin{equation}
P_x\left(\int_0^{\infty}DJ(\theta^*\vee l)_t e^{-rt}\,d(\theta_t^*\vee l) =0\right)=1. \label{inter_2}
\end{equation}
Since for each $l\geq 0$, $DJ(\theta^*\vee l)_t \geq 0$ for all $t\geq 0$, this condition tells us that the process $(\theta^*_t\vee l)_{t\geq 0}$ only increases on the set $\Gamma(l)=\{t\geq 0\,\vert\,DJ(\theta^*\vee l)_t =0\}$.
As the first point of increase in $\theta^*\vee l$ need not occur immediately upon entering the set $\Gamma(l)$ we have shown that
\begin{equation*}
\sigma(0,l):= \inf\{t\geq 0\,\vert\,DJ(\theta^*\vee l)_t=0 \}\leq \sigma^+(l)
\end{equation*}
for all $l\geq 0$. 

Step 3: In this step we show that for any $(t,l)\in \mathbb{R}_+ \times \mathbb{R}_+$, $\sigma(t,\theta_t^*\vee l)\leq \sigma^*(t,(\theta^*_t\vee l)+\varepsilon)$ for all $\varepsilon >0$. It follows from the definition (\ref{family of probs}) that
\begin{align*}
u(t,(\theta_t^*\vee l)+\varepsilon)e^{-rt} &:= \essinf{\sigma\geq t}E_x\left[\left. k_{\sigma}e^{-r\sigma}-\int_t^{\sigma}c_{\theta}(u,X_u,(\theta_t^*\vee l)+\varepsilon)e^{-ru}\,du\,\right\vert\,\mathcal{F}_t\right]\\
&\leq E_x\left[\left. k_{\sigma(t,l)}e^{-r\sigma(t,l)}-\int_t^{\sigma(t,l)}c_{\theta}(u,X_u,(\theta_t^*\vee l)+\varepsilon)e^{-ru}\,du \,\right\vert\,\mathcal{F}_t\right].
\end{align*}
Moreover, 
\begin{align*}
p_t^{\theta^*\vee l}&:= E_x\left[\left.\int_t^{\infty}c_{\theta}(u,X_u,\theta_u^*\vee l)e^{-ru}\,du\,\right\vert\,\mathcal{F}_t \right] \\
&= E_x\left[\left.\int_{\sigma(t,l)}^{\infty}c_{\theta}(u,X_u,\theta_u^*\vee l)e^{-ru}\,du + \int_t^{\sigma(t,l)}c_{\theta}(u,X_u,\theta_t^*\vee l)e^{-ru}\,du\,\right\vert\,\mathcal{F}_t \right]
\end{align*}
because (\ref{inter_2}) implies that $\theta_u^* \vee l=\theta_t^*\vee l$ for all $u\in [t,\sigma(t,l))$. Since $\sigma(t,l)$ attains (\ref{S_l family}) it follows that
\begin{multline*}
(u(t,(\theta_t^*\vee l)+\varepsilon)+p_t^{\theta^*\vee l})e^{-rt} \leq S(\theta^*\vee l)_te^{-rt} \\ - E_x\left[\left.\int_t^{\sigma(t,l)}(c_{\theta}(u,X_u,(\theta^*_t\vee l)+\varepsilon)-c_{\theta}(u,X_u,\theta_t^*\vee l))e^{-ru}\,du \,\right\vert\,\mathcal{F}_t\right].
\end{multline*}
The final term is positive due to Assumption 2. Hence we may conclude that for any $\varepsilon>0$,
\begin{equation}
u(t,(\theta_t^*\vee l)+\varepsilon)+p_t^{\theta^*\vee l} \leq S(\theta^*\vee l)_t \label{step_2}
\end{equation}
for any $(t,l)\in \mathbb{R}_+\times \mathbb{R}_+$. Fix any $\varepsilon >0$ and recall that the stopping time $\sigma^*(t,(\theta^*_t\vee l)+\varepsilon)$ attains $u(t,(\theta^*_t\vee l)+\varepsilon)$. Consequently it follows from (\ref{step_2}) that
\begin{align*}
DJ(\theta^*\vee l)_{\sigma^*(t,(\theta^*_t\vee l)+\varepsilon)} &= k_{\sigma^*(t,(\theta^*_t\vee l)+\varepsilon)} + p_{\sigma^*(t,(\theta^*_t\vee l)+\varepsilon)}^{\theta^*\vee l} \\ &= u(\sigma^*(t,(\theta^*_t\vee l)+\varepsilon),(\theta^*_t\vee l)+\varepsilon) + p_{\sigma^*(t,(\theta^*_t\vee l)+\varepsilon)}^{\theta^*\vee l} \\& \leq S(\theta^*\vee l)_{\sigma^*(t,(\theta^*_t\vee l)+\varepsilon)}
\end{align*}
which shows that $\sigma(t,\theta_t^*\vee l)\leq \sigma^*(t,(\theta^*_t\vee l)+\varepsilon)$ for any $\varepsilon >0$.

Step 4: In this step we show that $\sigma^*(t,\theta_t^*\vee l)=\sigma(t,l)$ for each $(t,l)\in \mathbb{R}_+\times\mathbb{R}_+$. It is sufficient to show that
\begin{equation}
0=S(\theta^*\vee l)_t = u(t,\theta^*_t\vee l) + p_t^{\theta^*\vee l} \label{step_3}
\end{equation}
for each $(t,l)\in \mathbb{R}_+\times\mathbb{R}_+$. Taking $l=0$ in (\ref{step_3}) we obtain (\ref{coincide}). Combining (\ref{step_1}) and (\ref{step_2}) yields
\begin{equation}
u(t,(\theta_t^*\vee l)+\varepsilon) + p_t^{\theta^*\vee l} \leq S(\theta^*\vee l)_t \leq u(t,\theta^*_t\vee l) + p_t^{\theta^*\vee l}. \label{estimate}
\end{equation}
Taking the limit as $\varepsilon \downarrow 0$ on the left hand side of (\ref{estimate}) gives
$u(t,(\theta_t^*\vee l)+) + p_t^{\theta^*\vee l} \leq S(\theta^*\vee l)_t$ from which we can conclude (\ref{step_3}) holds if the mapping $l\mapsto u(t,l)$ is continuous.

Fix $(t,l)\in \mathbb{R}_+\times\mathbb{R}_+$, $\varepsilon \neq 0$ and for ease of notation, let $\sigma^*:=\sigma^*(t,l+\varepsilon)$. The stopping time $\sigma^*$ is suboptimal for $u(t,l)$ and attains the essential infimun in $u(t,l+\varepsilon)$ so
\begin{align*}
u(t,l+\varepsilon)-u(t,l) &\geq E_x\left[\left.k_{\sigma^*}e^{-r\sigma^*}-\int_t^{\sigma^*}c_{\theta}(u,X_u,l+\varepsilon)e^{-ru}\,du\,\right\vert\,\mathcal{F}_t\right] \\ &\qquad\qquad- E_x\left[\left.k_{\sigma^*}e^{-r\sigma^*}-\int_t^{\sigma^*}c_{\theta}(u,X_u,l)e^{-ru}\,du\,\right\vert\,\mathcal{F}_t\right] \\ &= E_x\left[\left.\int_t^{\sigma^*}(c_{\theta}(u,X_u,l)-c_{\theta}(u,X_u,l+\varepsilon))e^{-ru}\,du\,\right\vert\,\mathcal{F}_t\right] \\ &\geq E_x\left[\left.\int_t^{\infty}(c_{\theta}(u,X_u,l)-c_{\theta}(u,X_u,l+\varepsilon))e^{-ru}\,du\,\right\vert\,\mathcal{F}_t\right] .
\end{align*}
where the final inequality uses that we have assumed that $c_{\theta}$ is increasing in $l$. As it has been assumed that $c_{\theta}(t,x,l)$ is uniformly bounded in $(t,x)$ it follows from the dominated convergence theorem that
\begin{align*}
\lim_{\varepsilon\rightarrow 0}u(t,l+\varepsilon)-u(t,l) &\geq  \lim_{\varepsilon\rightarrow 0}E_x\left[\left.\int_t^{
\infty}(c_{\theta}(u,X_u,l)-c_{\theta}(u,X_u,l+\varepsilon))e^{-ru}\,du\,\right\vert\,\mathcal{F}_t\right] \\
&=E_x\left[\left.\int_t^{\infty}\lim_{\varepsilon\rightarrow 0}(c_{\theta}(u,X_u,l)-c_{\theta}(u,X_u,l+\varepsilon))e^{-ru}\,du\,\right\vert\,\mathcal{F}_t\right] =0.
\end{align*}
where the final equality uses that it has been assumed that $l\mapsto c_{\theta}(t,x,l)$ is continuous. A symmetric argument can be used to show that $\lim_{\varepsilon\rightarrow 0}(u(t,l+\varepsilon)-u(t,l))\leq 0$ from which we can conclude that for each $t\geq 0$ the mapping $l\mapsto u(t,l)$ is continous as required. 

Step 5: In this step we show that the strict level passage times of the optimal control satisfy $\sigma^+(l)=\inf\{t\geq 0\,\vert\,\theta^*_t>l\}\leq \sigma^*(0,l+)$.
 
Suppose that $\theta^*_0>l$ so that $\sigma^+(l)=0$, in this case it has been shown in Step 1 and Step 2 that
\begin{equation*}
0\leq \sigma^*(0,l)\leq\sigma^*(0,\theta^*_0)\leq \sigma(0,l)\leq \sigma^+(l)=0
\end{equation*}
so $\sigma^+(l)=\sigma^*(0,l)$ as required. It remains to be shown that $\sigma^+(l)=\sigma^*(0,l)$ when $\theta_0^*\leq l$. Let $\sigma(l):= \inf\{t\geq 0\,\vert\,\theta_t^*\geq l\}$ denote the level passage times of $\theta^*$. Fix $k\geq l$, then for all $t<\sigma(k)$ 
\begin{align*}
DJ(\theta^*\vee k)_t &:= k_t + p_t^{\theta^*\vee k} =
k_t + E_x\left[\left.\int_t^{\infty}c_{\theta}(u,X_u,\theta^*_u\vee k)e^{-ru}\,du\,\right\vert\,\mathcal{F}_t\right] \\ &= k_t + E_x\left[\left.\int_t^{\sigma(k)}c_{\theta}(u,X_u,k)e^{-ru}\,du + \int_{\sigma(k)}^{\infty}c_{\theta}(u,X_u,\theta^*_u)e^{-ru}\,du\,\right\vert\,\mathcal{F}_t\right] \\ &> k_t + E_x\left[\left.\int_t^{\infty}c_{\theta}(u,X_u,\theta^*_u)e^{-ru}\,du\,\right\vert\,\mathcal{F}_t\right] =DJ(\theta^*)_t \geq 0
\end{align*}
where the assumption that $c_{\theta}$ is strictly increasing has been used. It follows that $\sigma(k)\leq \sigma(0,k)$ because by definition $\sigma(0,k)$ attains the essential infimum in the stopping problem $S(\theta^*\vee k)_0$. Hence, for each $l\geq 0$ such that  $\theta_0^*\leq l$ the strict level passage time $\sigma^+(l)$ satisfies 
\begin{equation}
\sigma^+(l)=\lim_{k\downarrow l}\sigma(k)\leq \lim_{k \downarrow l}\sigma(0,k) = \lim_{k \downarrow l}\sigma^*(0,k)=\sigma^*(0,l+). \label{inter_3}
\end{equation}
The second to last equality in (\ref{inter_3}) uses that it was shown in Step 4 that $\sigma^*(0,\theta_0^*\vee l)=\sigma(0,l)$.

Step 6: Define a candidate optimal control $\widehat{\theta}$ using $\widehat{\theta}_t = \sup_{0\leq u\leq t}l_u\vee 0$. We assume that $\widehat{\theta}\in \mathcal{A}$ so that $\widehat{\theta}$ satisfies the integrability condition introduced in Definition \ref{def_control} and is right-continuous. The assumption that the candidate control is right-continuous is not very restrictive since controls are increasing processes so $\widehat{\theta}$ and its right-continuous modification only differ on a countable set of points. Furthermore this modificaton does not alter strict level passage times of the candidate control.

In this step we aim to show that $P_x(\theta^*_t =\widehat{\theta}_t \quad \forall t\geq 0)=1$. Recall from Steps 2 and 5 that for all $l \in \mathbb{R}$
\begin{equation}
\sigma^*(0,l) \leq \sigma^+(l) \leq \lim_{k \downarrow l} \sigma^*(0,k).\label{inter_4}
\end{equation}
Consider a path where $\theta^*_t =\widehat{\theta}_t$ for all $t \geq 0$ does not hold, then there exists a $t'\geq 0$ such that either $\theta^*_{t'}<\widehat{\theta}_{t'}$ or $\theta^*_{t'}>\widehat{\theta}_{t'}$. For the first case, take some $c \in (\theta^*_{t'},\widehat{\theta}_{t'}) \cap \mathbb{Q}_{>0}$. By right-continuity of $\theta^*$ it follows from $\theta^*_{t'}<c$ that $\sigma^+(c)>t_0$. Furthermore, for $k \in (c,\widehat{\theta}_{t'})$ we have 
\[ t' \geq \inf \{ t \geq 0 \, | \, \widehat{\theta}_t > k \} = \inf \{ t \geq 0 \, | \, l_t \vee 0 > k \} = \inf \{ t \geq 0 \, | \, l_t > k \} \geq \sigma^*(0,k) \]
and hence $\lim_{k \downarrow c} \sigma^*(0,k) \leq t' < \sigma^+(c)$. For the second case, take some $c \in (\widehat{\theta}_{t'},\theta^*_{t'}) \cap \mathbb{Q}_{>0}$. Since $\theta^*_{t'}>c$ we have $\sigma^+(c) \leq t'$. Furthemore, by right-continuity of $\widehat{\theta}$ there exists $\delta>0$ such that $\widehat{\theta}_{t'+\delta}<c$. This implies

\[ \sup_{0 \leq u \leq t'+\delta} l_u \leq \sup_{0 \leq u \leq t'+\delta} (l_u \vee 0) <c \]
and hence $\sigma^*(0,c) \geq t'+\delta > t' \geq \sigma^+(c)$.
The above shows that
\begin{equation*}
\{\theta^*_t =\widehat{\theta}_t \quad \forall t\geq 0\}^c \subseteq \bigcup_{q \in \mathbb{Q}_{> 0}} \left( \{ \lim_{k \downarrow q} \sigma^*(0,k)<\sigma^+(q) \} \cup \{ \sigma^*(0,q)>\sigma^+(q) \} \right)
\end{equation*}
hence
\begin{align*}
1-P_x(\theta^*_t =\widehat{\theta}_t \quad \forall t\geq 0) \leq \sum_{q \in \mathbb{Q}_{> 0}} P_x \left( \lim_{k \downarrow q} \sigma^*(0,k)<\sigma^+(q) \right) + P_x \left( \sigma^*(0,q)>\sigma^+(q) \right) =0
\end{align*}
where the final equality uses (\ref{inter_4}).
\end{proof}

\medskip 

The singular control attaining (\ref{minimisation problem}) was shown in \cite{Bank} to be related to the representation studied in \cite{BEK}. However, the connection to the optimal stopping problems (\ref{family of probs}) is not derived in \cite{Bank}, despite the role of these problems in proving the representation result. The assumptions imposed in this section on the function $c_{\theta}$ differ from those used in \cite{BEK} so we are unable to prove Theorem \ref{Theorem connection} using the approach taken in \cite{Bank}.

The previous theorem links the solution to the singular control problem (\ref{minimisation problem}) to the stopping regions of the optimal stopping problems (\ref{family of probs}). Corollary \ref{corollary Snell} states that the points of increase in $\theta^*$ are included in the stopping region of the optimal stopping problem (\ref{snell}). As mentioned in Remark \ref{remark intuition}, the stopping problems (\ref{snell}) involve minimising the marginal impact of intervention. However, the marginal change in the future running costs $p^{\theta^*}$ includes the entire future path of $\theta^*$ so the optimal stopping time attaining (\ref{snell}) both determines the current value of $\theta^*$ as well as being determined by all future values of $\theta^*$. The objective function of the stopping problems (\ref{family of probs})
\begin{equation*}
E_x\left[\left.k_{\sigma}e^{-r\sigma} - \int_t^{\sigma}c_{\theta}(u,X_u,\theta_t^*)e^{-ru}\,du\,\right\vert\,\mathcal{F}_t\right]
\end{equation*}
is the marginal cost of postponing making the next small change in $\theta^*$ until the time $\sigma \geq t$. Hence, the problem $u(t,\theta^*_t)$ is attained by the stopping time which minimises the marginal cost of waiting prior to the next point of increase in $\theta^*$ and depends only on the current amount of control that has been used.

\section{Associated optimal stopping problems}

\label{Section L_stopping}

In this section we focus on providing explicit solutions to a family of optimal stopping problems indexed by $c\in \mathbb{R}$ of the form
\begin{equation}
v(x,c)=\inf_{\tau \geq 0}E_{x}\left[\int_0^{\tau}g(X_t)e^{-rt}\,dt + ce^{-r\tau}\right]
\label{family stopping problems}
\end{equation}
where $g$ is a continuous function and $X$ is a L\'{e}vy process. The solution to these problems can be used to provide explicit solutions to the singular control problem (\ref{minimisation problem}) in some special cases as illustrated in Sections \ref{Section L_mono} and \ref{Section L_production}. These problems are also of independent interest as they have been used to solve dynamic allocation problems, see \cite{EKK1}, \cite{GJ}, \cite{K}, \cite{KM} and the references therein. Define a function $G(x):=E_x[\int_0^{\infty}g(X_t)e^{-rt}\,dt]$ and assume that $\vert G(x)\vert <+\infty$. Applying the strong Markov property these problems can be rewritten as 
\begin{equation}
G(x)-v(x,c)=\sup_{\tau \geq 0}E_{x}\left[(G(X_{\tau})-c)e^{-r\tau}\right] . \label{stopping again}
\end{equation}
Parameterised families of optimal stopping problems of this type have been studied in \cite{BB}, \cite{BF}, \cite{EKK2}, \cite{Suo}.

The running supremum and infimum of the L\'{e}vy process $X$ are denoted
\begin{equation*}
\overline{X}_t = \sup_{0\leq s\leq t}X_s \quad , \quad \underline{X}_t = \inf_{0\leq s\leq t}X_s .
\end{equation*}
Let $X'=(X'_t)_{t\geq 0}$ to be an independent copy of $X$ and let $T(r),T'(r) \sim \exp(r)$ be an pair of independent exponentially distributed random variables which are independent of both $X$ and $X'$. Let $\mathbb{F}=(\mathcal{F}_t)_{t\geq 0}$ denote the natural filtration of $X$ and define a pair of functions 
\begin{equation}
\kappa(x):=\frac{1}{r} E_0\left[g\left(x+\underline{X}'_{T'(r)}\right)\right] \quad , \quad \mu(x):=\frac{1}{r}E_0\left[g\left(x+ \overline{X}'_{T'(r)}\right)\right] . \label{def kappa}
\end{equation}
Although, whether $X$ or $X'$ is used here initially appears of little consequence in what follows we will often take $x=X_t$ for some $t\geq 0$ in which case the independence of $X$ and $X'$ is relevant. Under the additional assumption that $g(x)\geq 0$ for all $x\in \mathbb{R}$ the form of function $\kappa$ is used in \cite{KM} to solve a dynamic allocation problem. The functions $\kappa$ and $\mu$ are closely related to the so-called `EPV' operators used in \cite{BY1}, \cite{BY2}. The first result in this section provides a representation of the function $G(x)$ which is used in the sequel. 

\begin{proposition}
\label{prop kappa representation of G}
Suppose that $G(x)=E_x[\int_0^{\infty}g(X_t)e^{-rt}\,dt]$ for a function $g$ which is continuous and monotone. When $g$ is increasing the function $G$ can be written as
\begin{equation}
G(x)=E_x\left[\int_0^{\infty}\sup_{0\leq s \leq t}\kappa(X_s) re^{-rt}\,dt \right] \label{representation G}
\end{equation}
where $\kappa(x)$ is as defined in (\ref{def kappa}). Similarly, when $g$ is decreasing the function $G$ can be written as
\begin{equation}
G(x)=E_x\left[\int_0^{\infty}\sup_{0\leq s \leq t}\mu(X_s) re^{-rt}\,dt \right] 
\label{representation G mu}
\end{equation}
where $\mu(x)$ is as defined in (\ref{def kappa}).
\end{proposition}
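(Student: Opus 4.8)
The plan is to prove the increasing case; the decreasing case then follows by the symmetric argument (replacing $\underline{X}'$ by $\overline{X}'$ and $\kappa$ by $\mu$, and noting $g$ decreasing means $-g$ increasing, so one could even reduce one case to the other). The key idea is that the Wiener–Hopf factorisation lets us split the exponentially-killed process into its running maximum part and an independent ``reflected'' remainder. Specifically, recall that if $T(r)\sim\exp(r)$ is independent of $X$, then $\overline{X}_{T(r)}$ and $X_{T(r)}-\overline{X}_{T(r)}$ are independent, and moreover $X_{T(r)}-\overline{X}_{T(r)} \eqDistr \underline{X}_{T(r)}$ (the spatial homogeneity and duality of the Wiener–Hopf factorisation). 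This is the structural fact I would invoke; with it, $G$ can be rewritten by conditioning.

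First I would write, using $G(x)=\frac1r E_x[g(X_{T(r)})] = \frac1r E_0[g(x+X_{T(r)})]$, and decompose $X_{T(r)} = \overline{X}_{T(r)} + (X_{T(r)}-\overline{X}_{T(r)})$. Conditioning on $\overline{X}_{T(r)}$ and using independence together with the identity in distribution $X_{T(r)}-\overline{X}_{T(r)}\eqDistr\underline{X}'_{T'(r)}$, I get
\begin{equation*}
G(x) = \frac1r E_0\!\left[\, E_0\!\left[ g\big(x+\overline{X}_{T(r)}+\underline{X}'_{T'(r)}\big)\,\middle|\,\overline{X}_{T(r)}\right]\right] = E_0\!\left[\kappa\big(x+\overline{X}_{T(r)}\big)\right],
\end{equation*}
by the very definition of $\kappa$ in (\ref{def kappa}). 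So $G(x)=E_0[\kappa(x+\overline{X}_{T(r)})] = E_x[\kappa(\overline{X}_{T(r)})]$ after shifting. The next step is to identify $\overline{X}_{T(r)}$ with the running maximum: since $T(r)$ is exponential and independent of $X$, one has $E_x[\kappa(\overline{X}_{T(r)})] = r\int_0^\infty e^{-rt} E_x[\kappa(\overline{X}_t)]\,dt$ by Fubini and the density of $T(r)$. This already looks close to (\ref{representation G}); what remains is to replace $E_x[\kappa(\overline{X}_t)]$ by $E_x[\sup_{0\le s\le t}\kappa(X_s)]$.

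That last replacement is the step I expect to be the main obstacle, and it is exactly where monotonicity of $g$ enters. When $g$ is increasing, I would argue that $\kappa$ is nondecreasing: since $\kappa(x)=\frac1r E_0[g(x+\underline{X}'_{T'(r)})]$ is an average of $g$ evaluated at translates of $x$, monotonicity of $g$ passes through the expectation, so $x\mapsto\kappa(x)$ is nondecreasing. For a nondecreasing function $\kappa$ and any path, $\kappa(\overline{X}_t) = \kappa(\sup_{0\le s\le t}X_s) = \sup_{0\le s\le t}\kappa(X_s)$ — here I should be slightly careful and note that $\kappa$ being nondecreasing gives $\kappa(\sup_s X_s)\ge\sup_s\kappa(X_s)$ trivially, and the reverse needs either continuity of $\kappa$ or an approximation of the supremum by a maximising sequence; continuity of $g$ plus dominated convergence (the cost functions here are bounded in the relevant range, or one truncates) gives continuity of $\kappa$, closing the gap. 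Substituting this pathwise identity under the expectation yields (\ref{representation G}). For the decreasing case, $\mu(x)=\frac1r E_0[g(x+\overline{X}'_{T'(r)})]$ is nonincreasing, and one uses instead the dual splitting $X_{T(r)}=\underline{X}_{T(r)}+(X_{T(r)}-\underline{X}_{T(r)})$ with $X_{T(r)}-\underline{X}_{T(r)}\eqDistr\overline{X}'_{T'(r)}$, giving $G(x)=E_x[\mu(\underline{X}_{T(r)})]$, and then $\mu(\underline{X}_t)=\sup_{0\le s\le t}\mu(X_s)$ because $\mu$ is nonincreasing and $\inf$ turns into $\sup$ under a nonincreasing map. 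A minor technical point to address is the finiteness/integrability: one should check $|\kappa|<\infty$ and that the iterated expectation is justified, which follows from $|G(x)|<\infty$ being assumed and Tonelli applied to $|g|$.
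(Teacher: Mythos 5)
Your argument reproduces the paper's proof almost verbatim: both rely on the Wiener--Hopf splitting $X_{T(r)}\eqDistr\overline{X}_{T(r)}+\underline{X}'_{T'(r)}$ with independence, then the tower property to reduce $G(x)$ to an expectation of $\kappa(x+\overline{X}_{T(r)})$, and finally the monotonicity of $\kappa$ inherited from $g$ to pass from the running maximum of $X$ to the running supremum of $\kappa(X)$. The one small difference is that you explicitly flag, and correctly resolve via continuity of $\kappa$, the step $\kappa(\overline{X}_t)=\sup_{0\le s\le t}\kappa(X_s)$, a point the paper asserts without comment.
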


\begin{proof}
Assume that $g$ is increasing, the case that $g$ is decreasing follows using a symmetric argument. Let $T(r) \sim \exp(r)$ be another exponentially distributed random variable which is independent of $T'(r)$, $X$ and $X'$, then
\begin{equation*}
G(x)=E_x\left[\int_0^{\infty}g(X_t)e^{-rt}\,dt\right] = \frac{1}{r}E_0\left[g(x+X_{T(r)})\right] = \frac{1}{r}E_0\left[g(x+\underline{X}'_{T'(r)}+\overline{X}_{T(r)})\right] ,
\end{equation*}
where for the final step we have used that $X-\underline{X}\sim \overline{X}$ (see \cite{Kyp} Lemma 3.5). Furthermore, using the tower property
\begin{align*}
\frac{1}{r} E_0\left[g(x+\underline{X}'_{T'(r)} +\overline{X}_{T(r)})\right] &= E_0\left[\int_0^{\infty} g(x+\overline{X}_t+\underline{X}'_{T'(r)})e^{-rt}\,dt\right] \\
&=
E_0\left[\int_0^{\infty}E_0\left[\left.g(x+\overline{X}_t+\underline{X}'_{T'(r)})\,\right\vert\,\mathcal{F}_t\right]e^{-rt}\,dt\right] .
\end{align*}
The definition of $\kappa$ provided in (\ref{def kappa}) and the independence of $X_t$, $X'_{T'(r)}$ implies that
\begin{equation*}
G(x)= E_0\left[\int_0^{\infty}\kappa(x+\overline{X}_t)re^{-rt}\,dt\right] .
\end{equation*}
Take $x\leq y$, then by assumption $g(x+ \underline{X}'_{T'(r)}) \leq g(y+ \underline{X}'_{T'(r)})$ $P_0$-a.s. so it follows that the function $\kappa$ is increasing. Hence we may conclude that
\begin{equation*}
G(x)= E_0\left[\int_0^{\infty}\sup_{0\leq s \leq t}\kappa(x+X_s)re^{-rt}\,dt\right] .
\end{equation*}
\end{proof}

\medskip

The previous proposition provides a class of functions for which an explicit form of the represention result in \cite{BEK} can be provided. Theorem 1 in \cite{BEK} states that $\kappa$, $\mu$ are the unique functions such that $G$ can be represented in the form (\ref{representation G})/(\ref{representation G mu}). It has been shown in \cite{BF} that the representation result in \cite{BEK} can be used to provide a `level crossing principle' for this family of optimal stopping problems.

\begin{theorem}[\cite{BF} Theorem 1.3]
\label{Theorem BankFollmer}
Suppose that $Y$ is an optional process such that 
\begin{equation*}
P(\lim_{t\rightarrow \infty}Y_t =0)=1
\end{equation*}
which can be represented as
\begin{equation}
Y_{\tau} = E\left[\left.\int_{(\tau,+\infty]}\sup_{\tau \leq v < t}\xi_v re^{-rt}\,dt \,\right\vert\,\mathcal{F}_{\tau} \right]
\end{equation}
for all stopping times $\tau$, where $\xi$ is a progressively measurable process with upper right continuous paths such that this conditional expectation is well defined for all $\tau$.
Then the level passage time
\begin{equation*}
\tau^{\ast} = \inf\{t\geq 0\,\vert\,\xi_t\geq 0\}
\end{equation*}
is optimal for the problem $\sup_{\tau\geq 0}E[Y_{\tau}]$.
\end{theorem}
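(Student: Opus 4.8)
The plan is to prove the level-crossing principle by a standard verification argument: we exhibit a lower bound for $E[Y_\tau]$ over all stopping times $\tau$, show that this bound is attained by $\tau^*$, and then argue that no stopping time can do better. The key structural fact we exploit is that the representation of $Y$ in terms of the running supremum of $\xi$ means that $Y_\tau$ is the value at time $\tau$ of a Snell-type envelope, and that $\xi$ changes sign exactly at the boundary of the continuation/stopping regions.

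First I would rewrite $E[Y_\tau]$ using the representation and Fubini. For a fixed stopping time $\tau$ we have
\begin{equation*}
E[Y_\tau] = E\left[\int_{(\tau,+\infty]}\sup_{\tau\leq v<t}\xi_v\,re^{-rt}\,dt\right] = E\left[\int_{(0,+\infty]}\ind{t>\tau}\sup_{\tau\leq v<t}\xi_v\,re^{-rt}\,dt\right].
\end{equation*}
The next step is to compare the integrand for an arbitrary $\tau$ against that for $\tau^* = \inf\{t\geq 0\,\vert\,\xi_t\geq 0\}$. On the event $\{\tau^*\leq \tau\}$ one has $\sup_{\tau\leq v<t}\xi_v \leq \sup_{\tau^*\leq v<t}\xi_v$ trivially since the supremum is over a smaller set; on the complementary event $\{\tau<\tau^*\}$ one uses that $\xi_v<0$ for all $v<\tau^*$, so $\sup_{\tau^*\leq v<t}\xi_v$ is never beaten by including the earlier values $v\in[\tau,\tau^*)$ where $\xi$ is negative — here the upper right continuity of $\xi$ and the definition of $\tau^*$ as a first passage to $[0,\infty)$ are what make the comparison go through cleanly. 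Combining, $\sup_{\tau\leq v<t}\xi_v \leq \sup_{\tau^*\vee\tau \leq v<t}\xi_v$ is not quite what we want; rather the point is that $\tau^*$ produces the pathwise-largest value among all $\tau$ for which the supremum process stays nonnegative, and for any $\tau$ the contribution from times where $\xi<0$ only decreases $Y_\tau$. Making this precise, I would show directly that $Y_{\tau^*}\geq E[Y_\tau\,\vert\,\mathcal{F}_{\tau^*\wedge\tau}]$ in the appropriate sense, or more simply compare $E[Y_{\tau^*}]$ with $E[Y_\tau]$ term by term in the time integral.

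The cleanest route, which I would actually carry out, is to introduce the Snell envelope of the gain process $g_t := (\text{the process whose expectation is } E[Y_\tau])$ and verify the two classical optimality conditions: (i) $Y_t \geq g_t$ for all $t$, i.e. $Y$ dominates the reward, and (ii) the stopped process is a martingale up to $\tau^*$ while $Y$ is a supermartingale overall, together with $Y_{\tau^*}=g_{\tau^*}$. Condition (i) and the supermartingale property both follow from the representation: writing $M_t := \int_{(0,+\infty]}\sup_{0\leq v<u}\xi_v\,re^{-ru}\,du$ plus the running-supremum corrections, one sees $e^{-rt}Y_t$ plus an increasing process is a martingale, which is exactly the Bank–El Karoui representation structure. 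The equality $Y_{\tau^*}=g_{\tau^*}$ uses $\xi_{\tau^*}\geq 0$ (from upper right continuity), so that the supremum $\sup_{\tau^*\leq v<t}\xi_v\geq 0$ immediately after $\tau^*$, which pins down $Y_{\tau^*}$ at the reward level. Then $E[Y_{\tau^*}] = E[g_{\tau^*}]\geq \sup_\tau E[g_\tau]\cdot(\text{via optional stopping})$, i.e. $\tau^*$ is optimal.

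I expect the main obstacle to be the integrability/regularity bookkeeping needed to justify Fubini and optional stopping: $\xi$ is only progressively measurable with upper right continuous paths and the conditional expectations are merely assumed well-defined, so I must be careful that $\sup_{\tau\leq v<t}\xi_v$ is measurable and integrable against $re^{-rt}\,dt$, and that the relevant (super)martingale is of class (D) or at least that $\lim_{t\to\infty}Y_t=0$ with enough uniform integrability to apply optional sampling at the possibly unbounded stopping times $\tau$ and $\tau^*$. The hypothesis $P(\lim_{t\to\infty}Y_t=0)=1$ is precisely what handles the behaviour at $t=+\infty$ and lets us include the point $\{+\infty\}$ in the integration domain without extra boundary terms. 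Since this is Theorem 1.3 of \cite{BF}, I would in fact simply cite it, but the verification sketch above is the argument underlying it.
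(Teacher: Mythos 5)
The paper does not prove this statement at all: it is imported verbatim as \cite{BF} Theorem~1.3, with no proof environment attached, so your closing remark that you would ``simply cite it'' is exactly what the paper does. On that level the proposal matches the paper.

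Since you also offer a sketch ``underlying'' the citation, two comments. Your first route (comparing the time integrals term by term) is the right one, but you abandon it at the point where you admit the comparison ``is not quite what we want.'' The clean way to close it is to compare \emph{both} sides to a $\tau$-independent quantity: for any stopping time $\tau$, since $\sup_{\tau\le v<t}\xi_v\le\bigl(\sup_{0\le v<t}\xi_v\bigr)^+$ and the right-hand side is nonnegative,
\begin{equation*}
E[Y_\tau]=E\left[\int_{(\tau,\infty]}\sup_{\tau\le v<t}\xi_v\,re^{-rt}\,dt\right]\le E\left[\int_{(0,\infty]}\Bigl(\sup_{0\le v<t}\xi_v\Bigr)^+ re^{-rt}\,dt\right],
\end{equation*}
and this bound is attained at $\tau^*$: upper right continuity gives $\xi_{\tau^*}\ge0$, while $\xi_v<0$ for $v<\tau^*$, so for $t>\tau^*$ one has $\sup_{0\le v<t}\xi_v=\sup_{\tau^*\le v<t}\xi_v\ge0$ (the pre-$\tau^*$ values cannot achieve the supremum) and for $t\le\tau^*$ the positive part vanishes; hence $E[Y_{\tau^*}]$ equals the upper bound. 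Your second, ``Snell envelope'' route is circular as written: in the problem $\sup_\tau E[Y_\tau]$ the process $Y$ \emph{is} the reward, so defining the gain process $g$ as ``the process whose expectation is $E[Y_\tau]$'' and then requiring $Y\ge g$ is vacuous, and the asserted supermartingale/martingale structure of $e^{-rt}Y_t$ is exactly what would need to be proved, not a consequence one ``sees.'' If you intend to give an argument rather than a citation, carry out the direct comparison above and drop the Snell-envelope detour.
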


In \cite{BF} and \cite{Suo} families of perpetual American put and call options written on exponential L\'{e}vy processes have been explicitly solved by guessing and verifying the form of the process $\xi$ in Theorem \ref{Theorem BankFollmer}. The following corollary provides explicit solutions to the family of optimal stopping problems (\ref{family stopping problems}) under the assumptions of Proposition \ref{prop kappa representation of G}. 

\begin{corollary}
\label{corollary stopping}
Suppose that $G(x)=E_x[\int_0^{\infty}g(X_t)e^{-rt}\,dt]$ where $g$ is increasing and satisfies the assumptions in Proposition \ref{prop kappa representation of G}. Consider the optimal stopping problem (\ref{family stopping problems}). The stopping and continuation regions for the stopping problem with a given $c\in \mathbb{R}$ are
\begin{equation}
\mathcal{D}_c := \left\{ x\in \mathbb{R}\,\vert\, \kappa(x)\geq c\right\} \quad , \quad
\mathcal{C}_c := \left\{ x\in \mathbb{R}\,\vert\, \kappa(x)< c\right\} . \label{stopping region}
\end{equation}
and the optimal stopping time for this problem is $\tau^{\ast}(c)=\inf\{ t\geq 0\,\vert\,\kappa(X_t)\geq c\}$.
\end{corollary}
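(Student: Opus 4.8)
The plan is to deduce Corollary~\ref{corollary stopping} by recognising the stopping problem (\ref{family stopping problems}) as an instance of the level-crossing setup in Theorem~\ref{Theorem BankFollmer}, applied to the Snell-envelope representation of the payoff furnished by Proposition~\ref{prop kappa representation of G}. First I would fix $c\in\mathbb{R}$ and, using (\ref{stopping again}), rewrite the problem as the maximisation of $E_x[(G(X_\tau)-c)e^{-r\tau}]$ over stopping times $\tau$. The natural candidate for the optional process $Y$ in Theorem~\ref{Theorem BankFollmer} is $Y_\tau := (G-c)(X_\tau)e^{-r\tau}$; but a cleaner route is to work with the process $\widehat Y_\tau := e^{r\tau}Y_\tau$ implicitly and instead directly build the representation. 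From Proposition~\ref{prop kappa representation of G} (the increasing case) we have
\begin{equation*}
G(x)=E_x\left[\int_0^{\infty}\sup_{0\leq s\leq t}\kappa(X_s)\,re^{-rt}\,dt\right],
\end{equation*}
and by the Markov property this upgrades, for any stopping time $\tau$, to
\begin{equation*}
e^{-r\tau}G(X_\tau)=E_x\left[\left.\int_{(\tau,\infty]}\sup_{\tau\leq s< t}\kappa(X_s)\,re^{-rt}\,dt\,\right\vert\,\mathcal{F}_\tau\right].
\end{equation*}
Subtracting the deterministic constant $c$, and using $c=E_x[\int_{(\tau,\infty]}c\,re^{-rt}\,dt\mid\mathcal{F}_\tau]\cdot e^{r\tau}$ rescaled appropriately, one obtains a representation of $e^{-r\tau}(G(X_\tau)-c)$ of exactly the form required in Theorem~\ref{Theorem BankFollmer} with signal process $\xi_t := \sup_{0\le s\le t,\ s\ \text{within the current window}}\kappa(X_s)-c$; more carefully, $Y_\tau = e^{-r\tau}(G(X_\tau)-c)$ is represented with $\xi_v := \kappa(X_v)-c$ since $\sup_{\tau\le v<t}(\kappa(X_v)-c)=\sup_{\tau\le v<t}\kappa(X_v)-c$.

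The second step is to verify the hypotheses of Theorem~\ref{Theorem BankFollmer} for this $Y$ and $\xi$. I must check: (i) $Y_t\to 0$ as $t\to\infty$, which follows from $|G|<\infty$ and the discount factor $e^{-rt}$, so $e^{-rt}(G(X_t)-c)\to 0$ $P_x$-a.s.; (ii) $\xi_v=\kappa(X_v)-c$ is progressively measurable with upper right-continuous paths, which follows because $\kappa$ is continuous (it is the expectation of the continuous $g$ against a fixed law, and monotone, hence continuous where finite — or one can invoke continuity of $x\mapsto\kappa(x)$ directly from dominated convergence using continuity of $g$) and $X$ has càdlàg paths; (iii) the conditional expectation defining the representation is well-defined for all $\tau$, which is where the integrability of $G$ and of $\sup_{0\le s\le t}\kappa(X_s)$ is used. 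Granting these, Theorem~\ref{Theorem BankFollmer} yields that $\tau^*(c)=\inf\{t\ge 0\mid\xi_t\ge 0\}=\inf\{t\ge 0\mid\kappa(X_t)\ge c\}$ is optimal for $\sup_{\tau\ge 0}E_x[Y_\tau]$, i.e.\ for $G(x)-v(x,c)$, and hence for $v(x,c)$.

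Finally I would identify the stopping and continuation regions. Since the optimal rule stops the first time $\kappa(X_t)\ge c$, the stopping region is $\mathcal{D}_c=\{x\mid\kappa(x)\ge c\}$ and the continuation region is its complement $\mathcal{C}_c=\{x\mid\kappa(x)<c\}$, matching (\ref{stopping region}); one should note $\tau^*(c)=0$ precisely on $\mathcal{D}_c$ and otherwise $\kappa(X_0)=\kappa(x)<c$, so the formula for $\tau^*(c)$ is consistent. The main obstacle I anticipate is the bookkeeping in step one: carefully transferring the static representation (\ref{representation G}) of Proposition~\ref{prop kappa representation of G} to the dynamic, stopping-time-indexed form $e^{-r\tau}G(X_\tau)=E_x[\int_{(\tau,\infty]}\sup_{\tau\le s<t}\kappa(X_s)re^{-rt}\,dt\mid\mathcal{F}_\tau]$ via the strong Markov property, and checking that subtracting the constant $c$ can be absorbed cleanly into the supremum inside the integral (using $\int_{(\tau,\infty]}re^{-r(t-\tau)}\,dt=1$), so that the signal process comes out exactly as $\kappa(X_v)-c$. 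The regularity conditions (ii)–(iii) are routine given continuity and monotonicity of $\kappa$ and the finiteness assumption on $G$.
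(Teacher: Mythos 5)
Your proposal is correct and follows essentially the same route as the paper's proof: rewrite the problem via (\ref{stopping again}), absorb the constant $c$ into the representation of Proposition \ref{prop kappa representation of G} (the paper does this by setting $f(x)=g(x)-rc$, you do it by pulling $c$ inside the supremum, which is equivalent), and then invoke Theorem \ref{Theorem BankFollmer} with signal $\xi_v=\kappa(X_v)-c$. Your explicit verification of the hypotheses of Theorem \ref{Theorem BankFollmer} is slightly more careful than the paper's, which only notes the right-continuity of $(\kappa(X_t))_{t\geq 0}$.
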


\begin{proof}
It was shown in (\ref{stopping again}) that the family of optimal stopping problems (\ref{family stopping problems}) can be expressed as
\begin{equation*}
G(x)-v(x,c)=\sup_{\tau \geq 0}E_{x}\left[(G(X_{\tau})-c)e^{-r\tau}\right] .
\end{equation*} 
Let $F(x)=G(x)-c$ so that $F(x)=E_x[\int_0^{\infty}f(X_t)e^{-rt}\,dt]$ where $f(x)=g(x)-rc$. Hence it follows from Proposition \ref{prop kappa representation of G} and the strong Markov property that 
\begin{equation*}
(G(X_t)-c)e^{-rt} = E_x\left[\left.\int_t^{\infty}\sup_{t\leq s \leq u}\left(\kappa(X_s)-c\right)re^{-ru}\,du\,\right\vert\,\mathcal{F}_t \right] .
\end{equation*}
The function $\kappa$ inherits the assumed continuity of $g$ so $(\kappa(X_t))_{t\geq 0}$ is a right-continuous process and hence Theorem \ref{Theorem BankFollmer} implies that the optimal stopping time attaining $v(x,c)$ is 
\begin{equation*}
\tau^{\ast}(c):=\inf\{ t\geq 0\,\vert\,\kappa(X_t)-c\geq 0\}.
\end{equation*} 
The stopping and continuation regions of the stopping problems (\ref{family stopping problems}) are defined as
\begin{equation*}
\mathcal{D}_c := \left\{ x\in \mathbb{R}\,\vert\, v(x,c)=c\right\} \quad , \quad
\mathcal{C}_c := \left\{ x\in \mathbb{R}\,\vert\, v(x,c)< c\right\}
\end{equation*}
so it immediately follows that these can be expressed as in (\ref{stopping region}).
\end{proof}

\medskip

When $g$ is increasing, the function $\kappa$ is sometimes referred to as the `Gittin's index' associated with the family of optimal stopping problems (\ref{family stopping problems}) in the sense that it can be represented as
\begin{equation}
\kappa(x)= \sup\{c\in \mathbb{R}\,\vert\,v(x,c)=c\} . \label{signal}
\end{equation}
The function $\kappa$ can be calculated explicitly when the Wiener-Hopf factorisation of the  L\'{e}vy process $X$ is known, for example when $X$ is spectrally one-sided or meromorphic (see \cite{Kyp}, \cite{KKP} for a discussion of properties of such processes). Explicit forms of the function $\kappa$ have been derived when $X$ is continuous or spectrally one-sided in \cite{KM}.

\begin{corollary}
\label{corollary value}
Under the assumptions of Corollary \ref{corollary stopping}, let $x^{\ast}(c)=\inf\{x\in \mathbb{R}\,\vert\,\kappa(x) \geq c\}$ and $T_{y}:=\inf\{ t\geq 0\,\vert\, X_t \geq y\}$. 
The value function $v(x,c)$ of the stopping problems (\ref{family stopping problems}) can be expressed as
\begin{align*}
G(x)-v(x,c)&= \frac{1}{r}E_x\left[\left(g(X_{T(r)}-cr)\right)\mathbb{I}_{[\overline{X}_{T(r)}\geq x^{\ast}(c)]} \right] \\&= E_x\left[ \int_0^{\infty}\sup_{0\leq u \leq t}(\kappa(X_u) -rc)^+ e^{-rt}\,dt \right].
\end{align*}
\end{corollary}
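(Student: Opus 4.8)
The plan is to express the value function $v(x,c)$ by starting from the representation in Corollary \ref{corollary stopping}, where the optimal stopping time is $\tau^*(c)=\inf\{t\geq 0\,\vert\,\kappa(X_t)\geq c\}$, and then rewriting $\tau^*(c)$ as a first-passage time of the L\'evy process $X$ itself. Since $\kappa$ is increasing (shown in Proposition \ref{prop kappa representation of G}) and right-continuous, the set $\{x\,\vert\,\kappa(x)\geq c\}$ is a half-line $[x^*(c),\infty)$ with $x^*(c)=\inf\{x\,\vert\,\kappa(x)\geq c\}$, so $\tau^*(c)=T_{x^*(c)}=\inf\{t\geq 0\,\vert\,X_t\geq x^*(c)\}$. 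Plugging this into (\ref{stopping again}) gives
\begin{equation*}
G(x)-v(x,c)=E_x\left[(G(X_{T_{x^*(c)}})-c)e^{-rT_{x^*(c)}}\right].
\end{equation*}

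Next I would convert this first-passage expression into the stated expectation over an exponential time. The key identity is the standard Wiener--Hopf / killing-at-an-exponential-time fact that for a fixed level $y$ one has $\{T_y\leq T(r)\}=\{\overline{X}_{T(r)}\geq y\}$, and on this event $X_{T(r)}$ has the same conditional law (by the strong Markov property at $T_y$ and the memorylessness of $T(r)$) as a fresh run of $X$ started from $X_{T_y}$ and killed at an independent $\exp(r)$ clock. Concretely, using $\frac{1}{r}E_x[h(X_{T(r)})]=E_x[\int_0^\infty h(X_t)e^{-rt}\,dt]$ together with the strong Markov property,
\begin{equation*}
\frac{1}{r}E_x\left[h(X_{T(r)})\mathbb{I}_{[\overline{X}_{T(r)}\geq y]}\right]
=E_x\left[e^{-rT_y}\mathbb{I}_{[T_y<\infty]}\,\frac{1}{r}E_{X_{T_y}}\left[h(X_{T(r)})\right]\right]
=E_x\left[e^{-rT_y}\mathbb{I}_{[T_y<\infty]}\left(E_{X_{T_y}}\left[\int_0^\infty h(X_t)e^{-rt}\,dt\right]\right).
\right]
\end{equation*}
Taking $h(z)=g(z)-rc$ so that $E_x[\int_0^\infty h(X_t)e^{-rt}\,dt]=G(x)-c$, the inner term becomes $G(X_{T_y})-c$, and with $y=x^*(c)$ this recovers exactly $E_x[(G(X_{T_{x^*(c)}})-c)e^{-rT_{x^*(c)}}]$, giving the first claimed equality $G(x)-v(x,c)=\frac{1}{r}E_x[(g(X_{T(r)})-cr)\mathbb{I}_{[\overline{X}_{T(r)}\geq x^*(c)]}]$.

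For the second equality I would return to the representation of $G-v$ coming from Theorem \ref{Theorem BankFollmer} applied to $F(x)=G(x)-c$, exactly as in the proof of Corollary \ref{corollary stopping}: there it was shown that $(G(X_t)-c)e^{-rt}=E_x[\int_t^\infty\sup_{t\leq s\leq u}(\kappa(X_s)-c)re^{-ru}\,du\,\vert\,\mathcal{F}_t]$, and evaluating the value $\sup_\tau E_x[(G(X_\tau)-c)e^{-r\tau}]$ at the optimal level passage time and using that the represented process hits the stopping region when the integrand first becomes nonnegative yields $G(x)-v(x,c)=E_x[\int_0^\infty\sup_{0\leq u\leq t}(\kappa(X_u)-rc)^+e^{-rt}\,dt]$; one should note $(\kappa(X_s)-c)$ here carries a factor-of-$r$ bookkeeping because $f(x)=g(x)-rc$, which is why the final expression reads $(\kappa(X_u)-rc)^+$ rather than $(\kappa(X_u)-c)^+$ — I would double-check this normalisation carefully. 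The main obstacle is the first equality: making rigorous the passage from the first-passage representation to the exponential-time form, i.e.\ justifying the strong Markov step and the event identity $\{T_y<T(r)\}=\{\overline{X}_{T(r)}\geq y\}$ including the behaviour on $\{T_y=\infty\}$ (where the indicator vanishes and $e^{-rT_y}=0$), and confirming that $x^*(c)$ is indeed the correct level — i.e.\ that $\kappa(X_t)\geq c$ first occurs exactly when $\overline{X}_t\geq x^*(c)$, which requires the monotonicity and right-continuity of $\kappa$ established earlier together with the fact that $X$ creeps or jumps across $x^*(c)$ consistently with $\kappa(X_{T_{x^*(c)}})\geq c$.
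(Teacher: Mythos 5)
Your proposal is correct and follows essentially the same route as the paper: the first equality is obtained via the strong Markov property at $T_{x^{\ast}(c)}$ together with the identity $\{u \geq T_{x^{\ast}(c)}\} = \{\overline{X}_u \geq x^{\ast}(c)\}$ (the paper writes this as a single integral with the indicator $\mathbb{I}_{[u\geq T_{x^{\ast}(c)}]}$ rather than conditioning on a fresh run of $X$, but it is the same computation), and the second by applying the representation of Proposition \ref{prop kappa representation of G} to $f=g-rc$ and noting that $\kappa(X_u)-c<0$ for $u<T_{x^{\ast}(c)}$ while $T_{x^{\ast}(c)}$ lies in the set where $\kappa(X)-c\geq 0$. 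On the normalisation you rightly flag: since the Gittin's index of $f=g-rc$ is $\kappa(x)-c$, the paper's own proof arrives at $(\kappa(X_u)-c)^{+}\,re^{-rt}$, so the $(\kappa(X_u)-rc)^{+}e^{-rt}$ appearing in the statement is a typo rather than something your bookkeeping needs to reproduce.
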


\begin{proof}
It has been shown in Corollary \ref{corollary stopping} that
\begin{equation*}
G(x)-v(x,c)=E_x\left[\left(G(X_{\tau^{\ast}(c)})-c\right)e^{-r\tau^{\ast}(c)}\right]
\end{equation*}
where $\tau^{\ast}(c):=\inf\{ t\geq 0\,\vert\, \kappa(X_t)\geq c\}=\inf\{ t\geq 0\,\vert\, X_t\geq x^{\ast}(c)\}=:T_{x^{\ast}(c)}$. The assumed form of $G$ and the strong Markov property imply that
\begin{align*}
E_x\left[\left(G(X_{\tau^{\ast}(c)})-c\right)e^{-r\tau^{\ast}(c)}\right] &=
E_x\left[ E_x\left[\left.\int_{T_{x^{\ast}(c)}}^{\infty}(g(X_u) -rc)e^{-ru}\,du\,\right\vert\, \mathcal{F}_{T_{x^{\ast}(c)}}\right]\right] \\ &=
E_x\left[\int_0^{\infty}(g(X_u) -rc)\mathbb{I}_{[u\geq T_{x^{\ast}(c)}]}e^{-ru}\,du\right] \\ &= \frac{1}{r}E_x\left[\left(g(X_{T(r)}) -rc\right)\mathbb{I}_{[\overline{X}_{T(r)}\geq x^{\ast}(c)]}\right] .
\end{align*}
For the second form of $v(x,c)$ note that applying the strong Markov property and the representation in Proposition \ref{prop kappa representation of G} yields
\begin{align*}
E_x\left[\left(G(X_{\tau^{\ast}(c)})-c\right)e^{-r\tau^{\ast}(c)}\right] &= 
E_x\left[ E_x\left[\left.\int_{T_{x^{\ast}(c)}}^{\infty}\sup_{T_{x^{\ast}(c)}\leq u \leq t}(\kappa(X_u) -c)re^{-rt}\,dt\,\right\vert\, \mathcal{F}_{T_{x^{\ast}(c)}}\right]\right] \\ &= E_x\left[ \int_{T_{x^{\ast}(c)}}^{\infty}\sup_{T_{x^{\ast}(c)}\leq u \leq t}(\kappa(X_u) -c)re^{-rt}\,dt \right] . 
\end{align*}
Furthermore, $T_{x^{\ast}(c)}$ is a point of increase of $\kappa$ and $\kappa(X_t)-c <0$ for all $t\in [0,T_{x^{\ast}(c)})$ so it follows that
\begin{equation*}
E_x\left[ \int_{T_{x^{\ast}(c)}}^{\infty}\sup_{T_{x^{\ast}(x)}\leq u \leq t}(\kappa(X_u) -c)re^{-rt}\,dt \right] = E_x\left[ \int_0^{\infty}\sup_{0\leq u \leq t}(\kappa(X_u) -c)^+ re^{-rt}\,dt \right] .
\end{equation*}
\end{proof}

\medskip

The next example shows how the approach taken in Corollaries \ref{corollary stopping} and \ref{corollary value} can be used to value a perpetual American put option. Denote by $\psi$ the Laplace exponent of $X$ defined by
\begin{equation}
\psi(c)=\frac{1}{t}\log E_0[e^{cX_t}]. \label{def psi}
\end{equation}
The Laplace exponent need not exist for all $c\in \mathbb{R}$. To ensure that $\psi(c)$ for $c\in \mathbb{R}$ is well-defined it is sufficient to assume that
\begin{equation}
\int_{\vert x\vert > 1}e^{cx}\,\nu(dx) < \infty \label{condition EM}
\end{equation} 
where $\nu$ is the L\'{e}vy measure associated with $X$ (see \cite{Kyp} Theorem 3.6).

\begin{example}[Perpetual put option]
A perpetual American put option with strike $K$ has payoff $(K-e^{X_{\tau}})^+e^{-r\tau}$ if the holder chooses to exercise the option at the stopping time $\tau$. Consider the problem
\begin{equation}
v(x)=\sup_{\tau\geq 0}E_x\left[(K-e^{X_{\tau}})^+e^{-r\tau}\right] \label{levy put}.
\end{equation}
When $P_x$ is the statistical measure this problem can be used to derive the optimal exercise time for this option, whereas when $P_x$ is a risk-neutral measure this problem can be used to derive a (not necessarily unique) fair price for this option.
It is assumed that for $c=1$, $X$ satisfies (\ref{condition EM}) and that $r-\psi(1) >0$. This assumption ensures that $\lim_{t\rightarrow \infty}E_x[e^{-rt+ X_t}]=\lim_{t\rightarrow \infty}e^{x-(r-\psi(1))t} =0$ which rules out degenerate cases where the optimal stopping time is infinite. Applying the It\^{o} formula gives
\begin{equation*}
e^{-rt+x+X_{t}}=e^{x}-\int_{0}^{t}e^{-rs+x+X_s}(r-\psi(1))\,ds+M_{t}
\end{equation*}
where $(M_t)_{t\geq 0}$ is a $P_x$-martingale, so for all $t\geq 0$
\begin{equation*}
e^{x}=E_x\left[e^{X_{t}-rt}\right]+(r-\psi(1))E_x\left[\int_{0}^{t}e^{-rs+X_s}\,ds\right].
\end{equation*}
The monotone convergence theorem implies that 
\begin{equation*}
\lim_{t\rightarrow \infty }E_x\left[ \int_{0}^{t}e^{-rs+X_s}\,ds\right] =E_x\left[ \int_{0}^{\infty}e^{-rs+X_s}\,ds\right]
\end{equation*}
and consequently,
\begin{equation*}
e^x=E_x\left[ \int_0^{\infty}(r-\psi(1))e^{-rs+X_s}\,ds\right] .
\end{equation*}
Let 
\begin{equation}
w(x)=\sup_{\sigma\geq 0}E_x\left[(K-e^{X_{\sigma}})e^{-r\sigma}\right] \label{example forward}
\end{equation} 
and define
\begin{equation}
\alpha(x):=\frac{r-\psi(1)}{r}E_0\left[e^{x+\overline{X}'_{T'(r)}}\right] =
e^x\left(E_0[e^{\underline{X}'_{T'(r)}}]\right)^{-1} .
\end{equation}
The second equality follows from the Wiener-Hopf factorisation (see: \cite{Kyp} Theorem 6.16). The function $x\mapsto K-\alpha(x)$ is decreasing so an argument similar to that in Corollary \ref{corollary stopping} implies that the stopping time 
\begin{equation*}
\sigma^{\ast}=\inf\{t\geq 0\,\vert\,\alpha(X_t)\geq K\}=\inf\left\{t\geq 0\,\vert\,e^{X_t}\geq KE_0[e^{\underline{X}'_{T'(r)}}]\right\}
\end{equation*}
is optimal for (\ref{example forward}) which coincides with the optimal stopping time for (\ref{levy put}) derived in \cite{AlKy}, so (\ref{levy put}) and (\ref{example forward}) coincide. To justify this observation note that by assumption 
\begin{equation*}
w(x)\geq \lim_{t\rightarrow \infty}E_x\left[(K-e^{X_t})e^{-rt}\right] = 0
\end{equation*}
so $(K-e^x)^+\leq w(x)\leq v(x)$. However, it can be checked using the It\^{o} formula that the process $(w(X_t)e^{-rt})_{t\geq 0}$ is a supermartingale so Lemma 9.1 in \cite{Kyp} implies that $v=w$.
\end{example}

%Corollaries \ref{corollary stopping} and \ref{corollary value} illustrate the connection between the results on optimal stopping problems in \cite{BF} and \cite{DLU} Theorem 2. The result of \cite{DLU} is closely linked to the approach taken to problems of this sort in both continuous and discrete time in \cite{BY1}, \cite{BY2}, \cite{Mor1}, \cite{Mor2} and \cite{Surya}. The next result is Theorem 2 in \cite{DLU} but the proof presented here is original and utilises the representation result from \cite{BEK} and Theorem \ref{Theorem BankFollmer} to connect these two strands of literature. 

\section{Monotone follower problems}

\label{Section L_mono}

This section focuses on the `monotone follower problem'
\begin{equation}
V(x)=\inf_{\theta \in\mathcal{A}}J(\theta) = \inf_{\theta \in\mathcal{A}}
E_x\left[\int_{0}^{\infty}c(t,X_t-\theta_t)
e^{-rt}\,dt+\int_{0}^{\infty}k_{t}e^{-rt}\,d\theta_{t}\right] \label{monotone follower}
\end{equation}
where $y \mapsto c(t,y)$ is a continuously differentiable function which is bounded from below and has an increasing derivative. Furthermore $X$ is a L\'evy process such that
\begin{equation*}
E_x \left[ \int_0^\infty c(t,X_t) e^{-rt} dt \right] < \infty 
\end{equation*}
so that Assumptions 3 and 4 are satisfied. The first result in this section identifies the control attaining (\ref{monotone follower}) using the connection to optimal stopping discussed in Theorem \ref{Theorem connection} and the representation provided in Proposition \ref{prop kappa representation of G}.

\begin{theorem}
\label{theorem monotone}
Suppose that $\int_{x>1}x^2\,\nu(dx)$ where $\nu$ is the L\'{e}vy measure associated with $X$. Suppose $k_t=K\geq 0$, $c(t,x)=c(x)$ for all $t\geq 0$ and consider the monotone follower problem (\ref{monotone follower}). Let
\begin{equation}
\kappa(x):=\frac{1}{r}E_0\left[ c_x\left( x+\underline{X}_{T(r)}\right) \right]-K \label{kappa monotone}
\end{equation}
and suppose that there exists $x^{\ast}\in \mathbb{R}$ which solves $\kappa(x)=0$. Then the optimal control for the monotone follower problem (\ref{monotone follower}), $\theta^{\ast}$, is given by
\begin{equation*}
\theta_{t}^{\ast}=\sup_{0\leq u\leq t}(X_u-x^{\ast})^{+} \qquad \forall t\geq 0.
\end{equation*}
\end{theorem}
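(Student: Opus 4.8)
The plan is to apply Theorem \ref{Theorem connection}, which reduces the problem to identifying the parameters $l_t$ defined in (\ref{parameter}) and checking that the resulting candidate control $\widehat{\theta}_t = \sup_{0\le u\le t} l_u \vee 0$ is the one claimed. First I would record that in the monotone follower setting $c(t,x,\theta)=c(x-\theta)$, so $c_\theta(t,x,\theta) = -c_x(x-\theta)$ and the intervention cost is constant, $k_t = K$. The boundedness-of-derivative hypothesis in Assumption 2 is not assumed here; as flagged in the text after Assumption 2 and in the statement, one first argues it can be removed (e.g. by a truncation/approximation of $c_x$, or by noting the second-moment conditions on $X$ and its running supremum control the relevant integrals directly), so that Theorem \ref{Theorem connection} applies. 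I would dispatch this at the start as a routine reduction.

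Next I would compute the family of stopping problems (\ref{family of probs}). With $c_\theta(u,X_u,l) = -c_x(X_u - l)$ and $k_\sigma \equiv K$, (\ref{family of probs}) becomes
\begin{equation*}
u(t;l)e^{-rt} = \essinf{\sigma\ge t} E_x\left[ K e^{-r\sigma} + \int_t^\sigma c_x(X_u - l) e^{-ru}\,du \,\Big|\,\mathcal{F}_t\right].
\end{equation*}
Writing this via the strong Markov property in the form (\ref{stopping again}), with the shifted process $X_u - l$, the parameter $l$ enters only through a spatial translation. The key observation is that the Gittin's index $\kappa$ of Corollary \ref{corollary stopping}, computed for the running cost function $g(y) = c_x(y)$ (which is increasing by hypothesis on $c$), is exactly $\kappa(x) = \tfrac1r E_0[c_x(x+\underline{X}_{T(r)})] - K$ as in (\ref{kappa monotone}) — the $-K$ accounting for the constant intervention cost. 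By Corollary \ref{corollary stopping} and the translation structure, the condition $u(t;l) = k_t = K$ in (\ref{parameter}) holds iff $\kappa(X_t - l)\ge 0$... more precisely, $l_t = \sup\{l : u(t;l)=K\}$ translates into $l_t = X_t - x^*$, where $x^*$ solves $\kappa(x^*)=0$ (using that $\kappa$ is increasing, so $\{y:\kappa(y)\ge 0\} = [x^*,\infty)$). The main work is to carry out this identification carefully: expressing $u(t;l)$ in terms of the stopping problem (\ref{family stopping problems}) driven by $g = c_x$, matching the Gittin's index formula (\ref{signal}) with (\ref{def kappa}), and reading off $l_t$.

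With $l_t = X_t - x^*$ established, the candidate control of Theorem \ref{Theorem connection} is
\begin{equation*}
\widehat{\theta}_t = \sup_{0\le u\le t} l_u \vee 0 = \sup_{0\le u\le t} (X_u - x^*)^+ = \sup_{0\le u\le t}(X_u - x^*)\vee 0,
\end{equation*}
which is exactly the claimed $\theta^*_t$. The remaining point is to verify $\widehat{\theta}\in\mathcal{A}$, i.e. the $L^2$-integrability condition $\|\int_0^\infty e^{-rt}\,d\widehat{\theta}_t\|_{L^2(P_x)} < \infty$ of Definition \ref{def_control}; this is where the hypothesis $\int_{x>1} x^2\,\nu(dx) < \infty$ enters, since it guarantees $E_x[\,\overline{X}_{T(r)}^{\,2}\,] < \infty$ (second moment of the running supremum at an independent exponential time), and integration by parts bounds $\int_0^\infty e^{-rt}\,d\widehat\theta_t$ by $r\int_0^\infty e^{-rt}\,\overline{X}_t\,dt$ up to constants. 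Then Theorem \ref{Theorem connection} delivers optimality of $\widehat\theta = \theta^*$ directly.

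The main obstacle I anticipate is the precise translation between the abstract parameterised problems (\ref{family of probs}) and the concrete Gittin's index $\kappa$ of Section \ref{Section L_stopping}: one must correctly handle the sign of $c_\theta = -c_x$, confirm that the relevant $G$-function is $G(x) = \tfrac1r E_0[c_x(x+X_{T(r)})]$ with $\underline{X}$ appearing after the Wiener–Hopf split as in (\ref{def kappa}), and check that "$\sup$" in (\ref{parameter}) matches "$\sup$" in (\ref{signal}) (rather than picking up a spurious boundary inclusion). A secondary technical nuisance is the removal of the bounded-derivative assumption; I would handle that by a monotone approximation $c_x^{(n)} = c_x \wedge n$ and a limiting argument, or simply by checking directly that all expectations in the proof of Theorem \ref{Theorem connection} remain finite under the stated moment conditions on $X$.
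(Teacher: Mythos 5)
Your proposal is correct and follows essentially the same route as the paper: verify $\theta^*\in\mathcal{A}$ via the second moment of $\overline{X}$ at an independent exponential time, identify $l_t=X_t-x^*$ through Corollary \ref{corollary stopping} applied to $g=c_x$ (with the sign flip $c_\theta=-c_x$), invoke Theorem \ref{Theorem connection}, and remove the bounded-derivative hypothesis by approximation. The only caveat is that your suggested truncation $c_x\wedge n$ handles just the upper tail; since $c_x$ may be unbounded below as well (e.g.\ quadratic costs), the approximating sequence must be modified on both tails, chosen with $c^{(n)}\leq c$ so that a Fatou argument transfers optimality of $\theta^{(n)}$ to $\theta^*$ in the limit, as the paper does.
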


\begin{proof}
First we show that $\theta^* \in \mathcal{A}$ i.e. that
\begin{equation}
E_x \left[ \left( \int_0^\infty e^{-rt} d\theta^*_t \right)^2 \right]<\infty. \label{control in A} 
\end{equation}
For any $a<b$ we have 
\begin{equation*}
\theta^*_b-\theta^*_a \leq (x+\overline{X}_b)-(x+\overline{X}_a)=\overline{X}_b-\overline{X}_a \leq \sup_{t \in [a,b]} (X_t-X_a) .
\end{equation*}
For any $n \geq 1$, let $Y_k=\sup_{t \in [(k-1)/n,k/n]} (X_t-X_{(k-1)/n})$ then it follows from the definition of the Stieltjes integral that 
\begin{equation*}
E_x \left[ \left( \int_0^\infty e^{-rt} d\theta^*_t \right)^2 \right] \leq E_0 \left[ \left( \int_0^\infty e^{-rt} d\overline{X}_t \right)^2 \right] \leq E_0 \left[ \left( \sum_{k \geq 1} e^{-r(k-1)/n} Y_k \right)^2 \right].
\end{equation*}
As $X$ has stationary and independent increments the sequence $(Y_k)_{k \geq 1}$ is i.i.d. So (\ref{control in A}) follows provided $E_0[Y_1^2]=E_0[\overline{X}_{1/n}^2]<\infty$. It follows from $\int_{x>1} x^2 \nu(dx)<\infty$ that $E_0[\overline{X}_{T(q)}^2]<\infty$ for any $q>0$ (see Exercise 7.1 in \cite{Kyp}). Moreover,
\begin{equation*}
E_0[\overline{X}_{T(q)}^2] = E_0 \left[ \int_0^\infty \overline{X}_t^2 q e^{-qt} dt \right] =  \int_0^\infty E_0 \left[ \overline{X}_t^2 \right] q e^{-qt} dt 
\end{equation*}
so it follows that $E_0[Y_1^2]=E_0[\overline{X}_{1/n}^2]<\infty$.

Now we assume that $y\mapsto c(y)$ has a bounded derivative so that Assumption 2 is satisfied and we can make use of the results in Section \ref{Section L_max}. Theorem \ref{Theorem connection} suggests the optimal control problem (\ref{monotone follower}) is connected to the following family of optimal stopping problems
\begin{equation}
Y(x;l) :=\inf_{\sigma}E_x\left[ Ke^{-r\sigma}+\int_{0}^{\sigma}c_x(X_t-l)e^{-rt}\,dt\right] \label{stopping probs mono}
\end{equation}
where $l\in \mathbb{R}$. It was shown in Corollary \ref{corollary stopping} that the stopping and continuation regions for (\ref{stopping probs mono}) are
\begin{equation*}
\mathcal{D}_l := \left\{ x\in \mathbb{R}\,\vert\, \kappa(x-l)\geq 0\right\} \quad , \quad
\mathcal{C}_l := \left\{ x\in \mathbb{R}\,\vert\, \kappa(x-l)< 0\right\} .
\end{equation*}
where $\kappa$ is as defined in (\ref{kappa monotone}) (and not (\ref{def kappa})). Hence, $Y(x,l)=K$ for all $l\in \mathbb{R}$ such that $\kappa (x-l)\geq 0$. It now follows from the definition of $x^{\ast}$ that 
\begin{equation*}
x-x^{\ast} =\sup\left\{l\in \mathbb{R}\,\vert \,Y(x;l)= K\right\} .
\end{equation*}
Let $(l_t)_{t\geq 0}$ be defined as $l_t := X_t -x^{\ast}$, then it follows from Theorem \ref{Theorem connection} that the optimal control for (\ref{monotone follower}) takes the form specified.

Now suppose that the derivative of $c$ is increasing and unbounded. For any $n \geq 1$, let $c^{(n)}$ be a continuously differentiable convex function with increasing, bounded derivative such that $c^{(n)}(y)=c(y)$ for $y\in[-n,n]$ while for $y\in(-\infty,-n)$ (resp. $y\in(n,\infty)$) we have $c^{(n)}_x(y) \geq c_x(y)$ and $c^{(n)}(y) \leq c(y)$ (resp. $c^{(n)}_x(y) \leq c_x(y)$ and $c^{(n)}(y) \leq c(y)$). This sequence of functions is chosen such that $c^{(n)}_x(y) \downarrow c_x(y)$ for $y\leq 0$ and $c^{(n)}_x(y) \uparrow c_x(y)$ for $y>0$ as $n\rightarrow \infty$.

Note that $\kappa(x) \in [-\infty,\infty)$ and $\kappa$ is increasing and continuous when finite. Hence if $x^* \in \mathbb{R}$ exists such that $\kappa(x^*)=0$ then $\kappa(x)>0$ for all $x>x^*$. For each $n\geq 1$, define
\[ \kappa^{(n)}(x) = \frac{1}{r} E_0[c^{(n)}_x(x+\underline{X}_{T(r)})]-K. \]
Each of these functions $\kappa^{(n)}$ is finite, increasing and continuous. As $c^{(n)}_x(y) \geq c_x(y)$ for $y\in(-\infty,n]$ we have $\kappa^{(n)}(x) \geq \kappa(x)$ for $x \leq n$. 

In fact, for $x<n$ we have $-c^{(n+1)}_x(x+\underline{X}_{T(r)}) \geq -c^{(n)}_x(x+\underline{X}_{T(r)})$, $-c^{(n)}_x(x+\underline{X}_{T(r)}) \rightarrow -c_x(x+\underline{X}_{T(r)})$ and $-c^{(n)}_x(x+\underline{X}_{T(r)}) \geq -c_x(x)$. Hence it follows from the monotone convergence theorem that $\kappa^{(n)}(x) \downarrow \kappa(x)$. In particular, the mean value theorem implies that for $n>x^{\ast}$, $\kappa^{(n)}$ has a unique zero, denoted $x^{(n)}$. Furthermore, the sequence of functions $c^{(n)}$ has been chosen such that $x^{(n)} \uparrow x^*$ as $n \rightarrow \infty$.

Define 
\[ J^{(n)}(\theta) = E_x \left[ \int_0^\infty c^{(n)}(X_t-\theta_t) e^{-rt} dt + K \int_0^\infty e^{-rt} d\theta_t \right] \]
and
\[ \theta^{(n)}_t = \sup_{0 \leq u \leq t} (X_u-x^{(n)})^+. \]
It follows from the first part of the proof that $\inf_{\theta \in \mathcal{A}}J^{(n)}(\theta)=J^{(n)}(\theta^{(n)})$. Furthermore,
\[ \theta^{(n)}_t-\theta^{*}_t = \sup_{0 \leq u \leq t} (X_u-x^{(n)})^+ - \sup_{0 \leq u \leq t} (X_u-x^*)^+ = \left\{
\begin{array}{cl}
0 & \text{if $\overline{X}_t \leq x^{(n)}$} \\
\overline{X}_t-x^{(n)} & \text{if $\overline{X}_t \in (x^{(n)},x^*)$} \\
x^*-x^{(n)} & \text{if $\overline{X}_t \geq x^*$}
\end{array} \right. \]
so that $\theta^{(n)}_t \to \theta^{*}_t$ a.s. for any $t \geq 0$ and

\[ \int_0^\infty e^{-rt} d(\theta^{(n)}_t-\theta^{*}_t) \leq \int_0^\infty d(\theta^{(n)}_t-\theta^{*}_t) = x^*-x^{(n)} \to 0 \]
as $n \to \infty$. For any $\theta \in \mathcal{A}$, Fatou's lemma implies that we may write 
\begin{eqnarray*}
J(\theta^*)  & = & E_x \left[ \int_0^\infty c(X_t-\theta^{*}_t) e^{-rt} dt + K \int_0^\infty e^{-rt} d\theta^{*}_t \right] \notag\\
 & = & E_x \left[ \int_0^\infty \liminf_{n \to \infty} c^{(n)}(X_t-\theta^{(n)}_t) e^{-rt} dt + K \liminf_{n \to \infty} \int_0^\infty e^{-rt} d\theta^{(n)}_t \right] \notag\\
 & \leq & \liminf_{n \to \infty} E_x \left[ \int_0^\infty c^{(n)}(X_t-\theta^{(n)}_t) e^{-rt} dt + K \int_0^\infty e^{-rt} d \theta^{(n)}_t \right] \notag\\
 & = & \liminf_{n \to \infty} J^{(n)}(\theta^{(n)}) \leq \liminf_{n \to \infty} J^{(n)}(\theta) \leq J(\theta) \notag 
\end{eqnarray*}
where the final inequality uses that $J^{(n)}\leq J$ since $c^{(n)} \leq c$. Thus we may conclude that $J(\theta^*)=\inf_{\theta \in \mathcal{A}} J(\theta)$ as required. 
\end{proof}

\medskip

If the function $x \mapsto \kappa(x)$ defined in the previous result is strictly negative for all $x \in \mathbb{R}$ then it is optimal to not exercise control in the `monotone
follower problem' (\ref{monotone follower}) i.e. 
\begin{equation*}
V(x)=J(0)=E_x\left[\int_{0}^{\infty}c(t,X_t)e^{-rt}\,dt\right] .
\end{equation*}

The next example illustrates that for L\'{e}vy processes where the distribution of $\underline{X}_{T(r)}$ is explicitly known it is possible to be more precise about the control which attains the infimum in (\ref{monotone follower}).

\begin{example}[Spectrally one-sided]
\label{example spectrally one-sided}
Suppose that $X$ is spectrally positive, that is the L\'{e}vy measure $\nu$ associated with $X$ does not give weight to the negative half plane, i.e. $\nu((-\infty,0))=0$ and the paths of $X$ are not monotone. As $X$ is spectrally positive the Laplace exponent 
\begin{equation*}
\psi(\beta) = \frac{1}{t}\log E_0\left[e^{-\beta X_t}\right] 
\end{equation*}
is well-defined for all $\beta >0$. Denote the right inverse of $\psi$ by $\Phi(\alpha)=\sup\{\beta\,\vert\,\psi(\beta)=\alpha\}$ then $-\underline{X}_{T(r)}$ is exponentially distributed with parameter $\Phi(r)$, i.e.
$P_0\left( -\underline{X}_{T(r)}\in dx\right)=\Phi(r)e^{-\Phi(r)x}\,dx$ (see \cite{Kyp} Section 8.1). Thus, when $X$ is spectrally positive, the function (\ref{kappa monotone}) appearing in Theorem \ref{theorem monotone} is
\begin{equation}
\kappa(x) =\frac{\Phi(r)}{r}\int_{0}^{\infty}c_x(x-y)e^{-\Phi(r)y}\,dy - K. \label{kappa example}
\end{equation}

In particular consider the monotone follower problem with quadratic running costs 
\begin{equation*}
V(x) =\inf_{\theta \in \mathcal{A}}E\left[ \int_{0}^{\infty}\frac{1}{2}(X_t-\theta _t)^{2}\,re^{-rt}\,dt+K\int_{0}^{\infty}\,e^{-rt}\,d\theta _{t}\right]
\end{equation*}
Applying integration by parts to (\ref{kappa example}) yields
\begin{equation*}
\kappa(x)=x+\frac{1}{\Phi(r)}-K
\end{equation*}
and it follows from Theorem \ref{theorem monotone} that the optimal control for this problem is
\begin{equation*}
\theta_{t}^{\ast}=\sup_{0\leq u\leq t}(X_{u}-x^{\ast })^{+}
\end{equation*}
where $x^{\ast}=K-1/\Phi(r)$. Moreover, in the case that $X$ is a standard Brownian motion and $K=0$ we have $x^{\ast}=1/\sqrt{2r} =E_0[\underline{X}_{T(r)}]$ which coincides with the solution found in \cite{Bank}.

The process $Y$ is referred to as spectrally negative if $-Y$ is spectrally positive. In this case, let 
\begin{equation*}
\psi'(\beta) = \frac{1}{t}\log E_0\left[e^{\beta Y_t}\right] 
\end{equation*}
which is well-defined for all $\beta>0$. Denote the right inverse of $\psi'$ by $\Phi'(\alpha)=\sup\{\beta\,\vert\,\psi'(\beta)=\alpha\}$. Let $W^{(r)}:[0,+\infty) \rightarrow [0,+\infty)$ denote the scale function associated with $Y$ which is characterised as the function such that
\begin{equation*}
\int_{0}^{\infty }e^{-\beta x}W^{(r)}(x)\,dx=\frac{1}{\psi'(\beta) -r}.
\end{equation*} 
for all $\beta >\Phi'(r)$. It can be shown (see \cite{Kyp} pp. 213) that
\begin{equation}
P_0(-\underline{Y}_{T(r)}\in dx) =\frac{r}{\Phi'(r)}dW^{(r)}(x)-rW^{(r)}(x)\,dx
\end{equation}
Hence in the case that the L\'{e}vy process $X$ in the monotone follower problem (\ref{monotone follower}) is spectrally negative, the function (\ref{kappa monotone}) appearing in Theorem \ref{theorem monotone} is
\begin{equation*}
\kappa(x) =\frac{1}{\Phi'(r)}\int_{0}^{\infty}c_x(x-y)\,dW^{(r)}(y)-\int_{0}^{\infty}c_x( x-y) W^{(r)}(y)\,dy - K.
\end{equation*}
There are several examples of L\'{e}vy processes where semi-explicit expressions for the scale function are available, see for example \cite{HK}.
\end{example}

\section{Irreversible investment problems}

\label{Section L_production}

This section focuses on the problem
\begin{equation}
V(x)=\sup_{\theta \in\mathcal{A}}K(\theta) = \sup_{\theta \in\mathcal{A}}
E_x\left[\int_{0}^{\infty}p\left(\theta_t\right) q(t,X_t)e^{-rt}\,dt
-\int_{0}^{\infty}k_{t}e^{-rt}\,d\theta_{t}\right] \label{irreverible investment problem}
\end{equation}
where $p: \mathbb{R}_+ \to \mathbb{R}_+$ is a continuously differentiable, concave, increasing function such that its derivative tends to $0$ as $\theta$ tends to $\infty$. Furthermore $q: \mathbb{R}_+ \times \mathbb{R} \to \mathbb{R}_{++}$ is a continuous function so that for each $t \geq 0$ the function $x \mapsto q(t,x)$ is increasing. This problem coincides with (\ref{objective function}) when the running costs are taken to have the separable form $c(t,x,\theta)=-p(\theta)q(t,x)$. Assume that $(k_t)_{t\geq 0}$ satisfies Assumption 1 and $X$ is a L\'{e}vy process such that the function $c(t,x,\theta)=-p(\theta)q(t,x)$ satisfies Assumptions 2-4. Under these assumptions the functional $J(\theta)=-K(\theta)$ is convex and Proposition \ref{Proposition Gateaux} and Theorem \ref{Theorem Maximum principle} can be applied.

This type of problem can be used to model the optimal expansion of production capacity. The state variable $X=(X_t)_{t\geq 0}$ describes a stochastic factor impacting the productivity of the firm (e.g. temperature, price of an input to production). The size of the firm $\theta$ is assumed to not impact the state parameter $X$ and the running costs $-p(\theta)q(t,x)$ describe the rate at which the firm generates profit while at the size $\theta$ given the random state of the world $x$. The assumption that $p$ is increasing and concave indicates that the larger the firm the more profit is being made but that the rate at which the firm benefits from extra units of capacity is decreasing. 
 
A prime example of a class of functions satisfying these assumptions are `Cobb-Douglas' functions where for some constants $C,\alpha,\beta \in \mathbb{R}$
\begin{equation*}
p(\theta)= C\theta^{\alpha} \quad ; \quad q(t,x)=e^{x\beta}.
\end{equation*}
The singular control problem (\ref{irreverible investment problem}) is studied using this type of production function in \cite{Bertola}. This problem has been studied via a  free-boundary problem in \cite{Kobila} using a general profit function which need not have the characteristics described here. More recently, the problem has also been tackled by applying first order conditions similar to those derived in Theorem \ref{Theorem Maximum principle} when $q(x)=e^{x}$ and $X$ is a L\'{e}vy process in \cite{RiedelSu} and \cite{Bank}. This section adds to this literature by using Theorem \ref{Theorem connection} to link the control problem (\ref{irreverible investment problem}) to an optimal stopping problem for which it is possible to derive a semi-explicit solution using the results in Section \ref{Section L_stopping}. The main result in this section is the next theorem which provides a semi-explicit characterisation of the singular control attaining the supremum in (\ref{irreverible investment problem}).

\begin{theorem}
\label{theorem multiplicative} Suppose that the intervention costs satisfy $k_{t}=K>0$ for all $t\geq 0$, $q(t,x)=q(x)$ is strictly increasing and $p_{\theta}(\theta):=\frac{d}{d\theta}p(\theta)$ is strictly decreasing. The inverse of $p_{\theta}$ is denoted $p_{\theta}^{-1}$. Let
\begin{equation}
L(x)=p_{\theta}^{-1}\left( \frac{rK}{E_0\left[q(x+\underline{X}_{T(r)})\right]} \right) \label{L multiplicative}
\end{equation}
and suppose that the functions $p,q$ are such that $L$ is well-defined for all $x\in \mathbb{R}$. Then the optimal control for (\ref{irreverible investment problem}), denoted $\theta^{\ast}$, is given by
\begin{equation}
\theta_{t}^{\ast}=\sup_{0\leq u\leq t}L(X_{u}) \label{theta m}
\end{equation}
for all $t\geq 0$. 
\end{theorem}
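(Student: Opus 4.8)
The plan is to exhibit (\ref{irreverible investment problem}) as an instance of the minimisation problem (\ref{minimisation problem}) with running cost $c(t,x,\theta)=-p(\theta)q(x)$, so that $c_{\theta}(t,x,\theta)=-p_{\theta}(\theta)q(x)$ is continuous and, since $p_{\theta}$ is strictly decreasing and $q>0$, strictly increasing in $\theta$; hence $J=-K$ is convex, Proposition~\ref{Proposition Gateaux} and Theorems~\ref{Theorem Maximum principle}--\ref{Theorem connection} are available once an optimal control $\theta^{*}\in\mathcal{A}$ is at hand (existence I would treat as in Theorem~\ref{theorem monotone}). As in the leading examples, $c_{\theta}$ need not be bounded uniformly in $(t,x)$ --- $p_{\theta}$ may blow up near $\theta=0$ and $q$ may be unbounded --- so I would first prove the theorem assuming $c_{\theta}$ bounded and afterwards remove this restriction exactly as at the end of the proof of Theorem~\ref{theorem monotone}, approximating $p$ and $q$ by $p^{(n)},q^{(n)}$ for which $-p^{(n)}_{\theta}q^{(n)}$ is bounded.

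Under the boundedness assumption, apply Theorem~\ref{Theorem connection}. With $k_{t}\equiv K$ and $c_{\theta}(u,X_u,l)=-p_{\theta}(l)q(X_u)$, the auxiliary family (\ref{family of probs}) reads
\begin{equation*}
u(t;l)e^{-rt}=\essinf{\sigma\geq t}E_x\left[\left.Ke^{-r\sigma}+p_{\theta}(l)\int_t^{\sigma}q(X_u)e^{-ru}\,du\,\right\vert\,\mathcal{F}_t\right],
\end{equation*}
and, $X$ being a L\'evy process, the strong Markov property and stationarity of the increments give $u(t;l)=\widetilde u(X_t;l)$ where
\begin{equation*}
\widetilde u(x;l)=\inf_{\sigma\geq 0}E_x\left[Ke^{-r\sigma}+\int_0^{\sigma}p_{\theta}(l)q(X_s)e^{-rs}\,ds\right].
\end{equation*}
This is precisely a problem of the form (\ref{family stopping problems}) with terminal cost $c=K$ and running cost $g(x)=p_{\theta}(l)q(x)$, which is continuous and increasing since $p_{\theta}(l)>0$ ($p$ being increasing); the standing hypotheses on $p,q$ provide the integrability needed to invoke Proposition~\ref{prop kappa representation of G} and Corollary~\ref{corollary stopping}.

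By Corollary~\ref{corollary stopping}, $\widetilde u(x;l)=K$ (immediate stopping is optimal) if and only if $\tfrac{p_{\theta}(l)}{r}E_0[q(x+\underline{X}_{T(r)})]\geq K$, i.e. $p_{\theta}(l)\geq rK/E_0[q(x+\underline{X}_{T(r)})]$; inverting the strictly decreasing $p_{\theta}$, this is equivalent to $l\leq L(x)$ with $L$ as in (\ref{L multiplicative}). Hence $\{l\in\mathbb{R}\,\vert\,u(t;l)=k_t\}=(-\infty,L(X_t)]$, so the parameter (\ref{parameter}) is $l_t=L(X_t)$; since $p_{\theta}^{-1}$ takes values in $\mathbb{R}_+$ this yields $\widehat\theta_t=\sup_{0\leq u\leq t}(l_u\vee 0)=\sup_{0\leq u\leq t}L(X_u)$. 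As $q$ increasing makes $x\mapsto rK/E_0[q(x+\underline{X}_{T(r)})]$ decreasing and $p_{\theta}^{-1}$ is decreasing, $L$ is increasing and (by continuity of $q$) continuous, so $\widehat\theta_t=L(\overline{X}_t)$, which is (\ref{theta m}). It then remains to check $\widehat\theta\in\mathcal{A}$, i.e. $\Vert\int_0^{\infty}e^{-rt}\,d\widehat\theta_t\Vert_{L^2(P_x)}<\infty$; this follows from the monotonicity and continuity of $L$ together with the integrability hypotheses on $p,q$ --- and, when $L$ grows rapidly (e.g. in the Cobb--Douglas case), an exponential moment bound on $\overline{X}_{T(r)}$ --- via the i.i.d.-increment estimate used in the first part of the proof of Theorem~\ref{theorem monotone}. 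Theorem~\ref{Theorem connection} then gives that $\widehat\theta$ attains the infimum in (\ref{minimisation problem}), i.e. the supremum in (\ref{irreverible investment problem}).

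The step I expect to be the main obstacle is removing the boundedness assumption on $c_{\theta}$. Because (\ref{irreverible investment problem}) is a supremum, the monotonicity of the truncated payoffs runs opposite to the minimisation set-up of Theorem~\ref{theorem monotone}: truncating $q$ from above and/or $p_{\theta}$ near $\theta=0$ forces $p^{(n)}q^{(n)}\leq pq$, hence $K^{(n)}(\theta)\leq K(\theta)$, so one must combine the convergence $\widehat\theta^{(n)}\to\widehat\theta$ (coming from convergence $L^{(n)}\to L$ of the associated thresholds, the analogue of $x^{(n)}\to x^{*}$ in Theorem~\ref{theorem monotone}) with Fatou's lemma applied to $-c^{(n)}$, keeping careful track of which inequalities are monotone, to conclude $K(\widehat\theta)\geq\sup_{\theta\in\mathcal{A}}K(\theta)$. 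Verifying that the $L^{(n)}$ are well defined, increasing, and converge monotonically to $L$ --- so that $\widehat\theta^{(n)}\to\widehat\theta$ pathwise with uniformly controlled variation --- is the technical core of the argument.
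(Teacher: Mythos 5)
Your proposal is correct and follows essentially the same route as the paper: reduce to the parameterised stopping family (\ref{family of probs}), identify it with (\ref{family stopping problems}) for $g(x)=p_{\theta}(l)q(x)$, read off the stopping region from Corollary \ref{corollary stopping} to get $l_t=L(X_t)$, and conclude via Theorem \ref{Theorem connection}, treating unbounded $c_{\theta}$ by truncation as in Theorem \ref{theorem monotone}. Your remark that the truncation inequalities run in the opposite direction for this maximisation problem is a fair observation about a step the paper itself only sketches; note also that in the bounded case the membership $\widehat\theta\in\mathcal{A}$ is immediate since $L$ is then bounded, so the i.i.d.-increment estimate is not needed there.
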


\begin{proof}
First assume that $p_\theta$ and $q$ are both bounded, then also the derivative of $\theta \mapsto c(t,x,\theta)=-p(\theta)q(x)$ is bounded and hence Assumption 2 is satisfied. Assumption 3 is satisfied since $c(t,x,\theta)=-p(\theta)q(x)$ is negative. Let $a$ be such that $0<q(x)<a$ for all $x \in \mathbb{R}$. Since $p_\theta$ is a positive, decreasing function with $p_\theta(\infty)=0$, $p_{\theta}^{-1}$ is decreasing with domain $(0,p_\theta(0)]$ and range $[0,\infty)$. The indirect cost function $C$ defined in (\ref{indirect cost}) satisfies
\begin{equation*}
C(x) = \inf_{z \geq 0} (-p(z)q(x)+rKz) = -p \left( z^*(x) \right) q(x) + rKz^*(x)
\end{equation*} 
where 
\begin{equation*}
z^*(x) = \left\{ \begin{array}{cl}
0 & \text{if $p_\theta(0)<rK/q(x)$}, \\
p_\theta^{-1}(rK/q(x)) & \text{if $p_\theta(0) \geq rK/q(x)$.}
\end{array} \right. .
\end{equation*}
In particular, since $q(x) \leq a$ and $p_\theta^{-1}$ is decreasing we have $z^*(x) \leq p_\theta^{-1}(rK/a)$. As $p$ is concave $p(\theta) \leq p_\theta(\theta_0) (\theta-\theta_0)+p(\theta_0)$ for any $\theta_0>0$ so
\begin{equation*}
C(x) \geq (- p_\theta(\theta_0) (z^*(x)-\theta_0) -p(\theta_0)) q(x) + rKz^*(x) \geq A - a p_\theta(\theta_0) z^*(x) 
\end{equation*}
for a constant $A$. Since $z^*(x) \leq p_\theta^{-1}(rK/a)$ this shows that $C$ is bounded from below so Assumption 4 holds.

Consequently, under the extra assumption that that $p_\theta$ and $q$ are both bounded the results in Section \ref{Section L_max} apply. If $p_\theta$ and/or $q$ is not bounded then an approximation using bounded functions can be applied analogue to the argument in Theorem \ref{theorem monotone} above.

First we show that $\theta^* \in \mathcal{A}$ i.e. that
\begin{equation}
E_x \left[ \left( \int_0^\infty e^{-rt} d\theta^*_t \right)^2 \right]<\infty. \label{control in A2} 
\end{equation}
As $q$ is bounded there exists $a$ such that $0<E_0[q(x+\underline{X}_{T(r)})] \leq a$ for all $x \in \mathbb{R}$. Hence
\begin{equation}
\frac{rK}{E_0[q(x+\underline{X}_{T(r)})]} \geq \frac{rK}{a} \label{Almost condition} 
\end{equation}
for all $x \in \mathbb{R}$. Since $p_{\theta}^{-1}$ is a decreasing function (\ref{Almost condition}) yields that $0 \leq L(x) \leq p_{\theta}^{-1}(rK/a)$. Thus $\int_0^\infty e^{-rt} d\theta^*_t \leq p_{\theta}^{-1}(rK/a)$ and in particular (\ref{control in A2}) holds.

For $l\geq 0$ define 
\begin{equation*}
\kappa(x;l)=\frac{p_{\theta}(l)}{r}E_0\left[q(x+\underline{X}'_{T(r)}) \right]-K
\end{equation*}
where $T(r)\sim \exp(r)$ and $X'$ is an independent copy of $X$ which is also independent of $T(r)$. Introduce a family of optimal stopping problems indexed by $l\geq 0$ using
\begin{equation}
Y(x;l) =\inf_{\sigma}E_x\left[ Ke^{-r\sigma}+p_{\theta}(l)\int_{0}^{\sigma}q(X_t)e^{-rt}\,dt \right] \label{parameterised stopping II}
\end{equation}
Theorem \ref{Theorem connection} shows that these stopping problems are linked to the solution to (\ref{irreverible investment problem}). In particular, the argument used in Corollary \ref{corollary stopping} shows that the stopping and continuation regions for (\ref{parameterised stopping II}) can be expressed as
\begin{equation*}
\mathcal{D}^l = \{x\in\mathbb{R}\,\vert\,\kappa(x;l) \geq 0 \} \quad ; \quad
\mathcal{C}^l = \{x\in\mathbb{R}\,\vert\,\kappa(x;l) < 0 \} .
\end{equation*}
Equivalently, these regions can be expressed as 
\begin{equation*}
\mathcal{D}^l = \{x\in\mathbb{R}\,\vert\,L(x) \leq l \} \quad ; \quad
\mathcal{C}^l = \{x\in\mathbb{R}\,\vert\,L(x) > l \} .
\end{equation*}
where $L(x)=\sup \left\{ l\in \mathbb{R}\,\vert\,Y(x;l)=K\right\}$ coincides with (\ref{L multiplicative}). The result now follows by applying Theorem \ref{Theorem connection}. %however for the sake of completeness we will verify directly that the conditions of Theorem \ref{Theorem Maximum principle} hold. 
\end{proof}
 
\medskip

\begin{remark} The above proof shows that when both $p_\theta$ and $q$ are bounded then also $L$ is bounded and in particular $\theta^* \in \mathcal{A}$. If $L$ is unbounded it does not seem so clear anymore whether $\theta^* \in \mathcal{A}$ still holds, as this not only depends on $X$ but also on $p$ and $q$. However if $\theta^* \not\in \mathcal{A}$ the above proof shows that $\theta^*$ yields a better payoff than any element in $\mathcal{A}$ and the payoff generated by $\theta^*$ can be approximated arbitrarily closely by elements in $\mathcal{A}$.
\end{remark}

The strict positivity of the costs of expansion $K$ was important in the previous theorem as otherwise it would not be possible to set $\kappa(x;l)=0$ and solve for $l$. Denote by $\theta^y$ the singular control such that $\theta_t^y =y$ for all $t\geq 0$. When there is no cost of expansion, i.e. $K=0$ we can immediately see that 
\begin{equation*}
K(\theta^y) = E\left[\int_{0}^{\infty}p(y)q(X_t)e^{-rt}\,dt\right] \leq  E\left[\int_{0}^{\infty}p(z)q(X_t)e^{-rt}\,dt\right] =K(\theta^z)
\end{equation*}
for all $z\geq y$ as it has been assumed that $q\geq 0$ and $p$ is increasing and concave. Hence it follows that $V(x)=\lim_{z\rightarrow \infty}K(\theta^z)$ so the firm would choose to immediately expand as much as possible. In this case the supremum in (\ref{irreverible investment problem}) is not attained by a singular control in the set $\mathcal{A}$. The case that $K=0$ is non-degenerate in irreversible investment problems with non-separable running profit functions, see for example \cite{Kobila} and \cite{Zariph}.

The rest of this section is dedicated to two examples. The first is the case of a Cobb-Douglas running profit function with non-constant returns to scale. The constant returns to scale case is provided in \cite{RiedelSu} using a guess-and-verify technique.

\begin{example}[Cobb-Douglas profit function]
\label{example cobb-douglas}
Consider the singular stochastic control problem
\begin{equation}
V(x)=\sup_{\theta\in\mathcal{A}}K(\theta) = \sup_{\theta \in\mathcal{A}}E_x \left[\int_{0}^{\infty}C\left(\theta_t^{(1-\alpha)}e^{\alpha X_t}\right)^{\beta}e^{-rt}\,dt -\int_{0}^{\infty}e^{-rt}\,d\theta_{t}\right] \label{irreverible example 1}
\end{equation}
where $C>0$ is a constant and the parameter $\alpha \in (0,1)$ describes the impact of each factor on the profit. The first factor in our model is the `size' of the firm $\theta$ and the second $e^{X_t}$ could be the price of an input or measure economic conditions. The constant $\beta>0$ is the `returns to scale'. When $\beta<1$ the firm is said to have `decreasing returns to scale' and when $\beta>1$ the firm is said to have `increasing returns to scale'. The case that $\beta=1$ is referred to as `constant returns to scale'. We restrict to $\beta\in (0,1/(1-\alpha))$ to ensure that Assumption 2 is satisfied by $c(t,x,\theta)=-(\theta)^{\beta(1-\alpha)}\exp(\alpha\beta x)$. Let $\gamma :=\alpha\beta/(1-\beta(1-\alpha))$, $A := C\beta(1-\alpha)$. The `indirect cost' function (\ref{indirect cost}) is 
\begin{equation*}
C(x):=\inf_{\theta\geq 0}(r\theta- C\theta^{\beta(1-\alpha)}e^{\alpha\beta x}) = 
A e^{\gamma x} - C\frac{A}{r}e^{\gamma\beta(1-\alpha) x}e^{\alpha\beta x} = A'e^{\gamma x}
\end{equation*}
where $A'$ is a constant. Suppose that the L\'{e}vy measure of $X$, denoted $\nu$, satisfies
\begin{equation*}
\int_{ x \geq 1}e^{\gamma x} \nu(dx) <+\infty .
\end{equation*}
This ensures that the Laplace exponent of $X$ is finite at $\gamma$ i.e. $\psi(\gamma):=\frac{1}{t}\log E[e^{\gamma X_{t}}]<+\infty$. Consequently, the Esscher transform
\begin{equation*}
\left. \frac{dP^{\gamma}_0}{dP_0} \right\vert_{\mathcal{F}_{t}}= e^{\gamma X_{t}-\psi(\gamma)t}
\end{equation*}
defines an equivalent probability measure. Let $E^{\gamma}_0$ denote the expectation under $P^{\gamma}$ so we may write
\begin{align*}
-K(\theta) &\geq E_x\left[\int_{0}^{\infty}C(X_t)e^{-rt}\,dt\right] = A'e^{\gamma x}E_0\left[\int_{0}^{\infty}e^{\gamma X_t-rt}\,dt\right] \\ &=A'e^{\gamma x}E^{\gamma}_0\left[\int_{0}^{\infty}e^{-(r-\psi(\gamma))t}\,dt\right]
 = Ae^{\gamma x}\int_{0}^{\infty}e^{-(r-\psi(\gamma))t}\,dt.
\end{align*}
Hence Assumption 4 is satisfied if we assume that $r-\psi(\gamma)>0$. 

Take $q(x):=\exp(\beta\alpha x)$ and $p(\theta)=C\theta^{\beta(1-\alpha)}$ so that 
$p_{\theta}(\theta)=A\theta^{\beta(1-\alpha)-1}$ and $(p_{\theta})^{-1}(y) = (y/A)^{1/(\beta(1-\alpha)-1)}$. Inserting this into (\ref{L multiplicative}) gives
\begin{equation}
L(x)=e^{\gamma x}\left(\frac{A}{r}E_0\left[e^{\beta\alpha \underline{X}'_{T(r)}} \right] \right)^{1/(1-\beta(1+\alpha))}=: \delta e^{\gamma x}. \label{example L}
\end{equation}
Consequently, applying Theorem \ref{theorem multiplicative} gives that the optimal control for (\ref{irreverible example 1}) satisfies
\begin{equation*}
\theta_{t}^{\ast} =\sup_{0\leq u\leq t} \delta e^{\gamma X_u}
\end{equation*}
In particular, when $C=1/(1-\alpha)$, $\beta=1$ we have $\gamma = 1$ and (\ref{example L}) simplifies to 
\begin{equation*}
L(x)=e^{x}\left(\frac{1}{r}E_0\left[e^{\alpha \underline{X}'_{T(r)}} \right] \right)^{1/\alpha}
\end{equation*}
which coincides with the explicit form of the solution provided in \cite{RiedelSu} Section 6 when their depreciation rate $\delta$ is taken to be zero.
\end{example}

In the previous example the optimal marginal profit at $t\geq 0$ is defined as
\begin{align*}
p_{\theta}(\theta_t^{\ast})q(t,X_t) &:= A(\theta^{\ast}_t)^{\beta(1-\alpha)-1}e^{\alpha\beta X_t}
= \frac{r}{E_0\left[e^{\beta\alpha \underline{X}_{T(r)}} \right]}\left(\sup_{0\leq u\leq t}e^{\gamma X_u}\right)^{\beta(1-\alpha)-1}e^{\alpha\beta X_t} \\ &= \frac{r}{E_0\left[e^{\beta\alpha \underline{X}_{T(r)}} \right]}\inf_{0\leq u\leq t}e^{-\alpha\beta (X_u-X_t)} \leq \frac{r}{E_0\left[e^{\beta\alpha \underline{X}_{T(r)}} \right]}
\end{align*}
where the inequality holds with equality at the points of increase of $\theta^{\ast}$. This optimal marginal profit indicates that the solution to (\ref{irreverible example 1}) is to keep $\theta$ sufficiently large as to prevent the marginal profit from production from slipping below the level $r/E_0[\exp(\alpha\beta \underline{X}_{T(r)})]$.

Our final example deals with the case that the intervention costs is a
L\'{e}vy process which could be correlated with the process $X$. At first glance this problem
appears beyond the scope of Theorem \ref{theorem multiplicative} but we use an appropriate change of measure to transform the objective functional so that Theorem \ref{theorem multiplicative} can be applied. 

\begin{example}[Independent intervention costs]
\label{example Levy costs}
Let $X,Y$ be two independent L\'{e}vy processes with $X_0=x$ and $Y_0=0$ under $P_x$ for all $x\in \mathbb{R}$. The characteristic triplet of $X$ (resp. $Y$) is denoted $(\mu_X,\sigma_X,\nu_X)$ (resp. $(\mu_Y,\sigma_Y,\nu_Y)$). Suppose that the intervention costs are of the form $k=(k_{t})_{t\geq 0}=(e^{Y_t})_{t\geq 0}$. Consider the problem (\ref{irreverible investment problem}) where $q(t,x)=e^x$  for all $t\geq 0$ and $q(\theta)=\frac{1}{1-\alpha}\theta^{1-\alpha}$ for some constant $\alpha \in(0,1)$. Under these assumptions (\ref{irreverible investment problem}) reads
\begin{equation}
V(x)=\sup_{\theta \in\mathcal{A}}K(\theta) = \sup_{\theta \in\mathcal{A}}E_x\left[ \int_{0}^{\infty}\frac{1}{1-\alpha}\theta^{1-\alpha}_t e^{X_t}e^{-rt}\,dt - \int_{0}^{\infty}e^{Y_t-rt}\,d\theta_{t}\right] . \label{another irreversible obj}
\end{equation}
We assume that the measure $\nu_Y$ satisfies
\begin{equation*}
\int_{ x \geq 0} e^{x}\,\nu_Y(dx)<+\infty . 
\end{equation*}
This assumption ensures that the Laplace exponent of $Y$, denoted $\psi_{Y}(c):=\frac{1}{t}\log E[e^{cY_t}]$ satisfies
$\psi_Y(1)<\infty$. Define an equivalent probability measure $\widetilde{P}_x$
via the density
\begin{equation*}
\left. \frac{d\widetilde{P}_x}{dP_x} \right\vert_{\mathcal{F}_{t}}=e^{Y_{t}-\psi_{Y}(1)t}.
\end{equation*}
Denote by $\widetilde{E}_x$ the expectation under the measure $\widetilde{P}_x$ and assuming that $\widetilde{r}:=r-\psi_{Y}(1)>0$ we may write the objective function in (\ref{another irreversible obj}) as
\begin{equation}
K(\theta) = \widetilde{E}_x\left[\int_{0}^{\infty}\frac{1}{1-\alpha}\theta^{1-\alpha}_{t}
 e^{X_t-Y_t}e^{-\widetilde{r}t}\,dt -\int_{0}^{\infty}e^{-\widetilde{r}t}\,d\theta_{t}\right] .
\label{step uncorrelated example}
\end{equation}
According to \cite{Kyp} Theorem 3.9, the measure $\widetilde{P}_x$ is structure preserving in the sense that $Y$ remains a L\'{e}vy process under $\widetilde{P}_x$ with characteristic triplet $(\widetilde{\mu}_Y,\sigma_Y,\widetilde{\nu}_Y)$ where $\widetilde{\nu}_Y(dx):=e^{x}\nu_Y(dx)$ and
\begin{equation*}
\widetilde{\mu}_Y:=\mu_Y - \sigma^{2} + \int_{\vert x\vert >0}x(1-e^{x})\,\nu_Y(dx) .
\end{equation*}
As we have assumed that $X$ and $Y$ are independent, this change of measure does not affect the dynamics of $X$. Let $Z:=Y-X$ then $Z$ is a L\'{e}vy process under $\widetilde{P}_x$ with triplet $(\mu_X-\widetilde{\mu}_Y,(\sigma_X^2 + \sigma_Y^2)^{1/2} ,\nu_X-\widetilde{\nu}_Y)$. It is now possible to derive a solution to the transformed problem (\ref{step uncorrelated example}) using the method used in Example \ref{example cobb-douglas}. Assume that
\begin{equation*}
\int_{\vert x\vert > 0} e^{x}\,(\nu_X(dx)-\widetilde{\nu}_Y(dx))<+\infty
\end{equation*}
and that the Laplace exponent of $Z$ satisfies $r-\psi_Z(1)>0$, then the optimal control for (\ref{another irreversible obj}) is given by
\begin{equation*}
\theta^{\ast}_{t}=\sup_{0\leq u\leq t} \beta e^{Z_{u}/\alpha}
\end{equation*}
for all $t\geq 0$ where 
\begin{equation*}
\beta^{\alpha}:=\frac{1}{\widetilde{r}}\widetilde{E}_0\left[\exp\left(\underline{Z} _{T(\widetilde{r})}\right)\right]
\end{equation*}
and $T(\widetilde{r}) \sim \exp(\widetilde{r})$ is independent from $Z$.
\end{example}


\begin{thebibliography}{99}

\bibitem{AlKy} Alili, L., \& Kyprianou, A.E. (2005). Some remarks on first passage of L\'{e}vy processes, the American put and pasting principles. Ann. Appl. Probab. 15(3), 2062-2080.

\bibitem{BM} Bahlali, S., \& Mezerdi, B. (2005) A general stochastic maximum principle for singular control problems. Elect. J. Probab. 10(30), 988-1004.

\bibitem{Bank} Bank, P. (2005) Optimal control under a dynamic fuel constraint. SIAM J. Control Optim. 44(4), 1529-1541.

\bibitem{BB} Bank, P., \& Baumgarten, C. (2010) Parameter-dependent optimal stopping problems for one-dimensional diffusions. Electron. J. Probab. 15, 1971-1993.

\bibitem{BEK} Bank, P., \& El Karoui, N. (2005) A stochastic representation theorem with applications to optimization and obstacle problems. Ann. Probab. 32(1B), 1030-1067.

\bibitem{BF} Bank, P., \& F\"{o}llmer, H. (2003) `American options, multi-armed bandits, and optimal consumption plans: a unifying view', in Paris-Princeton Lectures on Mathematical Finance 2002, Lecture Notes in Mathematics 1814, 1-42. Springer, Berlin.

\bibitem{BC} Bather, J. A., \& Chernoff, H. (1966) Sequential decisions in the control of a spaceship. Proc. 5th Berkeley Symp. Math. Statist. Prob. 3, 181-207.

\bibitem{BR} Benth, F. E., \& Reikvam, K. (2004). A connection between singular stochastic control and optimal stopping. Appl. Math. Optim. 49, 27-41.

\bibitem{Bertola} Bertola, G. (1998). Irreversible investment. Research in Economics, 52(1), 3-37.

\bibitem{B} Boetius, F. (2002). Singular stochastic control and its relation to Dynkin games and entry and exit problems. PhD Thesis, Universit\"{a}t Konstanz.

\bibitem{BY1} Boyarchenko, S., \& Levendorski\v{i}, S. Z. (2006) General option exercise rules, with applications to embedded options and monopolistic expansion. Contrib. Theoret. Econom. 6(1), 1-51.

\bibitem{BY2} Boyarchenko, S., \& Levendorski\v{i}, S. Z. (2007) Practical guide to real options in discrete time. Internat. Econom. Rev. 48(1), 311-342.

\bibitem{CH} Cadenillas, A., \& Haussmann, U. G. (1994) The stochastic maximum principle for a singular control problem. Stochastics, 49(3-4), 211-237.

\bibitem{CF} Chiarolla, M. B. \&  Ferrari, G. (2011) Identifying the Free Boundary of a Stochastic, Irreversible
Investment Problem via the Bank-El Karoui Representation Theorem, preprint. Arxiv:1108.4886v5.

\bibitem{CH2} Chiarolla, M. B., \& Haussmann, U. G. (2009) On a Stochastic Irreversible Investment Problem. 
SIAM J. Control Optim. 48, 438-462.

\bibitem{DLU} Deligiannidis, G., Le, H., \& Utev, S. (2009) Optimal stopping for processes with independent increments, and applications. J. Appl. Probab., 46(4), 1130-1145.

\bibitem{DM} Dellacherie, C., \& Meyer, P. A. (1982) Probabilities and Potentials B: Theory of Martingales. North Holland, Amsterdam, New York, Oxford.

\bibitem{ET} Ekeland, J., \& T\'{e}man R. (1976) Convex Analysis and Variational Problems. North-Holland, Amsterdam and American Elsevier, New York.

\bibitem{EKK1} El Karoui, N., \& Karatzas, I. (1994) Dynamic allocation problems in continuous time. Ann. Appl. Probab. 4(2), 255-286.

\bibitem{EKK2} El Karoui, N., \& Karatzas, I. (1995) The optimal stopping problem for a general American put-option. IMA Vol. Math. Appl. 65, 63-74.

\bibitem{FThesis} Ferarri, G. (2011) On stochastic, irreversible investment problems in continuous time: a new approach based on first order conditions. PhD. Thesis, Sapienza, Univerit\'{a} di Roma.

\bibitem{FS} Fleming, W. H., \& Soner, H. M. (2006) Controlled Markov Processes and Viscosity Solutions (Vol. 2). New York: Springer.

\bibitem{GJ} Gittins, J. C., \& Jones, D. M. (1974) A dynamic allocation index for the sequential design of experiments. In J. Gani (Ed.)  \textit{Progress in Statistics} (pp. 241-266). North-Holland, Amsterdam.

\bibitem{HL1} Hamad\`{e}ne, S., \& Lepeltier J. P. (1995) Zero-sum stochastic differential games and backward equations. Systems Control Lett. 24, 259-263.

\bibitem{HL2} Hamad\`{e}ne, S., \& Lepeltier J. P. (1995) Backward equations, stochastic control and zero-sum stochastic differential games. Stochastics 54, 221-231.
1995.

\bibitem{HK} Hubalek, F., \& Kyprianou, E. (2010) Old and new examples of scale functions for spectrally negative L\'{e}vy Processes. Progress in Probability, Vol. 63, 119-145.

\bibitem{J} Jacod, J. (1979) Calcul stochastique et probl\`{e}mes de martingales. Lecture Notes in Math. 714. Springer, Berlin.

\bibitem{K81} Karatzas, I. (1981) The monotone follower problem in stochastic decision theory. Appl. Math. Optim. 7(1), 175-189.

\bibitem{K2} Karatzas, I. (1983) A class of singular stochastic control problems. Adv. in Appl. Probab. 15(2), 225-254.

\bibitem{K} Karatzas, I. (1984) Gittins indices in the dynamic allocation problem for diffusion processes. Ann. Probab. 12, 173-192.

\bibitem{KS_Fin} Karatzas, I., \& Shreve, S. E. (1998) Methods of Mathematical Finance. Springer-Verlag, New York.

\bibitem{KS} Karatzas, I., \& Shreve, S. E. (1984) Connections between optimal stopping and singular stochastic control I. monotone follower problems. SIAM J. Control Optim. 22(6) 856-877.

\bibitem{KW} Karatzas, I., \& Wang, H. (2001) Connections between bounded variation control and Dynkin games. In J. L. Menaldi, E. Rofman, and A. Sulem, (Eds.) Optimal Control and Partial Differential Equations (Volume in honor of A. Bensoussan) (pp. 363-373). IOS Press, Amsterdam.

\bibitem{KM} Kaspi, H., \& Mandelbaum, A. (1995) L\'{e}vy bandits: multi-armed bandits driven by L\'{e}vy processes. Ann. Appl. Prob. 5(2), 541-565.

\bibitem{Kobila} Kobila, T. \O . (1993) A class of solvable stochastic investment problems involving singular controls. Stochastics 43(1-2), 29-63.

\bibitem{KKP} Kuznetsov, A., Kyprianou, A. E., \& Pardo, J. C. (2010) Meromorphic L\'{e}vy processes and their fluctuation identities. Ann. Appl. Prob. 22(3), 1101-1135.

\bibitem{Kyp} Kyprianou, A. E. (2006) Introductory Lectures on Fluctuations of L\'{e}vy Processes with Applications. Springer, Berlin.

\bibitem{Oks} \O ksendal, B. (2000) Irreversible Investment Problems. Finance Stoch. 4, 223-250.

\bibitem{Pham} Pham, H. (2006) Explicit Solution to an Irreversible Investment Model with a Stochastic Production Capacity. In Y. Kabanov and R. Liptser (Eds.) From Stochastic Analysis to Mathematical Finance, Festschrift for Albert Shiryaev. Springer.

\bibitem{RiedelSu} Riedel, F., \& Su, X. (2011) On irreversible investment. Finance Stoch. 15(4), 607-633.

\bibitem{Zariph} Scheinkman, J. A., \& Zariphopoulou, T. (2001) Optimal environmental management in the presence of irreversibilities. Journal of Economic Theory, 96(1), 180-207.

\bibitem{Suo} Su, X. (2006) A new approach to real options. University of Bonn Discussion Paper No. 21.

\bibitem{Werner} Werner, D. (2006) Funktionalanalysis. Springer-Verlag, Berlin Heidelberg.

\end{thebibliography}
\end{document}